\newtheorem{lemma}{Lemma}[section]
 \newtheorem{theorem}{Theorem}[section]
\newtheorem{corollary}{Corollary}[section]
\newtheorem{definition}{Definition}[section]
\begin{document}
\title{Extremal Polygonal Cacti for General Sombor Index\footnote{This paper  was supported by the National Natural Science Foundation of China [No.  11971406]. }}
\author{\small Jiachang Ye, Jianguo Qian\footnote{Corresponding author. E-mail: jgqian@xmu.edu (J.G. Qian)}
 \\  \small  School of Mathematical Sciences, Xiamen University,\\ \small Xiamen, 361005, P.R. China}
\date{} \maketitle

\begin{abstract}
The Sombor index of a graph $G$ was recently introduced by Gutman from the geometric point of view, defined as $SO(G)=\sum_{uv\in E(G)}\sqrt{d(u)^2+d(v)^2}$,
 where $d(u)$ is the degree of a vertex $u$.  For two real numbers  $\alpha$ and $\beta$, the $\alpha$-Sombor index and general Sombor index of $G$ are two generalized forms of the Sombor index defined as  $SO_\alpha(G)=\sum_{uv\in E(G)}(d(u)^{\alpha}+d(v)^{\alpha})^{1/\alpha}$ and $SO_\alpha(G;\beta)=\sum_{uv\in E(G)}(d(u)^{\alpha}+d(v)^{\alpha})^{\beta}$, respectively.
  A $k$-polygonal cactus is a connected graph in which every block is a cycle of length $k$. In this paper, we  establish a lower bound on $\alpha$-Sombor index for $k$-polygonal cacti  and show that the bound is attained only by chemical $k$-polygonal cacti. The extremal  $k$-polygonal cacti for $SO_\alpha(G;\beta)$ with  some particular $\alpha$ and $\beta$ are also considered.
\end{abstract}

\noindent
{\bf Keywords:} general Sombor index, polygonal cactus, extremal problem \par
\noindent
{\bf Mathematics Subject Classification}:   	05C07,  05C09, 	05C90

 \baselineskip=0.30in

\section{ Introduction}
We consider only connected simple graphs. For a graph $G$, we denote by $V(G)$ and $E(G)$ the vertex set and edge set of $G$, respectively. For a vertex $v\in V(G)$, we denote by $d_G(v)$, or $d(v)$ if no confusion can occur, the degree of $v$. A vertex $v$ is called a {\em cut vertex} of $G$ if  $G-v$ is not connected.

 In mathematical chemistry, particularly in QSPR/QSAR investigation, a large number of topological indices were introduced in an attempt to characterize the physical-chemical properties of molecules. Among these indices, the vertex-degree-based indices play important roles \cite{De10,Gu2013,Gutman}. Probably the most studied are, for examples, the Randi\'{c} connectivity index $R(G)$  \cite{Ran1}, the first and second Zagreb indices $M_1(G)$ and $M_2(G)$ \cite{Gu1972}, which were introduced for the total $\pi$-energy of alternant hydrocarbons.

A vertex-degree-based index of a graph $G$ can be generally represented as the sum of a real function $f(d(u),d(v))$ associated with the edges of $G$ \cite{Gu2020}, i.e.,
$$I_{f}(G)=\sum\limits_{uv\in E(G)} f(d(u), d(v)),$$
where $f(s,t)=f(t,s)$.  In the literature, $I_{f}(G)$ is also called the  {\em connectivity function} \cite{Wang2014} or {\em   bond incident degree index} \cite{Ali,Vu,YJC2}.

Recently, Gutman \cite{Gu2020} introduced an idea to view an edge $e=uv$ as a geometric point, namely the degree-point, that is, to view the ordered pair $(d(u),d(v))$ as the coordinate of $e$.  Therefore, it is interesting to consider the function $f(s,t)$  from the geometric point of view. A natural considering is to define $f(s,t)$  as a geometric distance from the degree-point $(s,t)$ to the origin. In this sense, the first Zagreb index, i.e., $M_1(G)=\sum_{uv\in E(G)}(d(u)+d(v))=\sum_{uv\in E(G)}(|d(u)|+|d(v)|)$, is exactly the index defined on  the Manhattan distance. Along this direction, a more natural considering would be to define $f(s,t)$  as the Euclidean distance, i.e., $f(s,t)=\sqrt{s^2+t^2}$. Indeed, based on this idea, Gutman  \cite{Gu2020} introduced the {\it Somber index} defined by
$$SO(G)=\sum_{uv\in E(G)}\sqrt{d(u)^2+d(v)^2}$$
and further determined the extremal trees for the index. In \cite{Das}, Das et al.  established some  bounds on the Sombor index and some relations between Sombor index and the Zagreb indices and, in  \cite{Red}, Red\v{z}povi\'{c} studied chemical applicability of the Sombor index. Further, Cruz et al. \cite{Cru} characterized the extremal  chemical graphs and hexagonal systems for the Sombor index.

More recently, for positive real number $\alpha$, R\'eti et al. \cite{Reti} defined the {\em $\alpha$-Sombor index} as
$$SO_{\alpha}(G)=\sum_{uv\in E(G)}(d(u)^{\alpha}+d(v)^{\alpha})^{1/\alpha},$$
which could be viewed as the one based on Minkowski distance. In the same paper,  they also characterized the extremal graphs with few cycles for $\alpha$-Sombor index.

In this paper we consider a more generalized form of Sombor index defined as
  $$SO_\alpha(G;\beta)=\sum_{uv\in E(G)}(d(u)^{\alpha}+d(v)^{\alpha})^{\beta},$$
   where $\alpha, \beta$ are real numbers.  We note that this form is a natural generalization of the Sombor index, which was also introduced elsewhere, e.g., the first  $(\alpha,\beta)-KA$ index in \cite{Kulli2019} and the general Sombor index in \cite{Her}. In addition to the first Zagreb, Sombor and the $\alpha$-Sombor index listed above, the general Sombor index also includes many other known indices, e.g., the modified first Zagreb index ($\alpha=-3, \beta=1$) \cite{Nik}, forgotten index ($\alpha=2, \beta=1$) \cite{Fur}, inverse degree index ($\alpha=-2, \beta=1$) \cite{Faj}, modified Sombor index ($\alpha=2, \beta=-1/2$) \cite{Kulli2021}, first Banhatti-Sombor index ($\alpha=-2, \beta=1/2$) \cite{Lin} and general sum-connectivity index ($\alpha=1 , \beta\in \mathbb{R}$) \cite{Zhou1}.

A {\em block} in a graph is a cut edge or a maximal 2-connected component. A {\em cactus} is a connected graph in which every block is a cut edge or a cycle. Equivalently, a cactus has no edge lies in more than one cycle.  In the following, we call a $k$-cycle (a cycle of length $k$) a $k$-{\em polygon}. If each  block of a cactus $G$ is a  $k$-polygon, then $G$ is called a {\em $k$-polygonal cactus} or {\em polygonal cactus} with no confusion.

In this paper,   we consider the extremal $k$-polygonal cacti for $SO_\alpha(G;\beta)$. In the following section we  establish a lower bound on $\alpha$-Sombor index for $k$-polygonal cacti  and show that the bound is attained only by chemical  $k$-polygonal cacti. In the third section we characterize the extremal polygonal cactus with maximum $SO_\alpha(G;\beta)$ for (i)  $\alpha> 1$ and $\beta \geq 1$; and (ii) $1/2\leq \alpha<1$ and $\beta= 2$, respectively. In the fourth section,  we characterize the extremal polygonal cacti with minimum $SO_\alpha(G;\beta)$ for $\alpha>1$ and $\beta \geq 1$.

 \section{Polygonal cacti with minimum $\alpha$-Sombor index}
  For convenience, in what follows we denote $r_\alpha(s,t;\beta)=(s^\alpha+t^\alpha)^\beta$,
   $r_\alpha(s,t)=(s^\alpha+t^\alpha)^{1/\alpha}$ and $r(s,t)=\sqrt{s^2+t^2}$, where $s>0$, $t>0$, $\alpha, \beta \in \mathbb{R}$ and $\alpha \not=0$. For integers $n$ with $n\geq 1$ and $k$ with $k\geq 3$, we denote by $\mathcal {G}_{n,k}$ the class of $k$-polygonal cacti  with  $n$  polygons.

In this section we consider the $\alpha$-Sombor index $SO_{\alpha}(G)$, i.e., $SO_{\alpha}(G;1/ \alpha)$. For $G\in\mathcal{G}_{n,k}$, it is clear that $|V(G)|=nk-n+1,|E(G)|=nk$ and every vertex of $G$ has even degree no more than $2n$. Further, it is clear that $v$ is a cut vertex of $G$ if and only if $v$ has degree no less than 4, i.e., $d_G(v)\geq 4$. A polygon is called a {\em pendent polygon} if it contains exactly one cut-vertex of $G$.  For $2\leq s\leq t\leq 2n$, we denote by $n_{s,t}=n_{s,t}(G)$ the number of edges in $G$ that join two vertices  of degrees $s$ and $t$. Let $X=\{(s,t): s,t\in\{2,4,\ldots,2n\},s\leq t\}$ and $Y=X\setminus\{(2,2), (2,4), (4,4) \}.$

\begin{definition} \label{15.1d}\emph{\cite{Mar1979}}
Let
$\pi=\big(w_{1},w_{2},\ldots,w_{n}\big)$ and
$\pi'=\big(w'_{1},w'_{2},\ldots,w'_{n}\big)$ be two non-increasing  sequences of nonnegative real numbers. We write $\pi\lhd \pi'$
if and only if $\pi\neq  \pi'$,  $\sum_{i=1}^{n}w_{i}=\sum_{i=1}^{n}w'_{i}$, and
$\sum_{i=1}^{j}w_{i}\leq\sum_{i=1}^{j}w'_{i}$ for all
$j=1,\,2,\,\ldots,\,n$.
\end{definition}
A function $\zeta(x)$ defined on a convex set $X$ is called {\em strictly  convex} if
\begin{equation}\label{15.1e}\zeta\big(\mu x_1+(1-\mu)x_2\big)<\mu \zeta(x_1)+(1-\mu)\zeta(x_2)\end{equation}
for  any $0<\mu<1$ and $x_1,x_2\in X$ with $x_1\neq x_2$.
\begin{lemma}\label{15.1l}
\emph{\cite{Mar1979}}
Let  $\pi=\big(w_{1},w_{2},\ldots,w_{n}\big)$ and
$\pi'=\big(w'_{1},w'_{2},\ldots,w'_{n}\big)$ be two non-increasing  sequences of nonnegative real numbers. If
$\pi\lhd \pi'$, then for any strictly convex function $\zeta(x)$, we have  $\sum_{i=1}^{n} \zeta{(w_{i})}<\sum_{i=1}^{n} \zeta{(w'_{i})}$.
\end{lemma}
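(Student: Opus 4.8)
This is the strict form of Karamata's majorization inequality: among two sequences with a common sum, the majorizing one produces the larger value of any sum of a strictly convex function. To fix notation write $a_i=w'_i$ and $b_i=w_i$, so that $\pi\lhd\pi'$ translates into $a_1\ge\cdots\ge a_n\ge 0$, $b_1\ge\cdots\ge b_n\ge 0$, $\sum_{i=1}^n a_i=\sum_{i=1}^n b_i$, $\sum_{i=1}^j b_i\le\sum_{i=1}^j a_i$ for every $j$, and $a\ne b$; the goal is $\sum_{i=1}^n\zeta(b_i)<\sum_{i=1}^n\zeta(a_i)$. The plan is to obtain the weak inequality by a summation-by-parts argument and then upgrade it to a strict one, which is where the real work lies.

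For the weak inequality I would write each term as a chord slope of $\zeta$. For every $i$ put $c_i=\bigl(\zeta(a_i)-\zeta(b_i)\bigr)/(a_i-b_i)$ when $a_i\ne b_i$, and let $c_i$ be a one-sided derivative of $\zeta$ at $a_i$ when $a_i=b_i$; in both cases $\zeta(a_i)-\zeta(b_i)=c_i(a_i-b_i)$. Since $\zeta$ is convex its difference quotient is nondecreasing in each endpoint, and because $(a_i)$ and $(b_i)$ are both non-increasing the interval with endpoints $\min(a_{i+1},b_{i+1})$ and $\max(a_{i+1},b_{i+1})$ sits weakly to the left of the one with endpoints $\min(a_i,b_i)$ and $\max(a_i,b_i)$; hence $c_1\ge c_2\ge\cdots\ge c_n$. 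Setting $d_i=a_i-b_i$ and $D_j=\sum_{i=1}^j d_i$, the majorization hypotheses read $D_j\ge 0$ for all $j$ and $D_n=0$, so summation by parts gives
\[
\sum_{i=1}^n\bigl(\zeta(a_i)-\zeta(b_i)\bigr)=\sum_{i=1}^n c_id_i=\sum_{i=1}^{n-1}(c_i-c_{i+1})\,D_i\ge 0,
\]
each summand being a product of nonnegative factors.

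The strictness is the main obstacle: the chord slopes and the partial sums $D_i$ can vanish in inconvenient places, so chasing a single strictly positive summand in the display above is fiddly. I would therefore resolve strictness through the transfer characterization of majorization (Hardy--Littlewood--P\'olya, also recorded in \cite{Mar1979}): since $\pi\lhd\pi'$ with $\pi\ne\pi'$, the sequence $b$ is reached from $a$ by a finite, nonempty chain of Robin--Hood transfers, each replacing a pair $(a_i,a_j)$ with $a_i>a_j$ by $(a_i-\varepsilon,a_j+\varepsilon)$ where $0<\varepsilon\le (a_i-a_j)/2$. Writing $t=\varepsilon/(a_i-a_j)\in(0,1/2]$, we have $a_i-\varepsilon=(1-t)a_i+t a_j$ and $a_j+\varepsilon=t a_i+(1-t)a_j$, so applying \eqref{15.1e} to each and adding yields $\zeta(a_i-\varepsilon)+\zeta(a_j+\varepsilon)<\zeta(a_i)+\zeta(a_j)$. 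Thus every transfer strictly decreases the sum $\sum_i\zeta(\cdot)$, and since at least one transfer is actually performed we obtain $\sum_{i=1}^n\zeta(b_i)<\sum_{i=1}^n\zeta(a_i)$, as required. The appeal to the transfer lemma is exactly what makes the strict inequality transparent, while the summation-by-parts computation serves as an independent check of the non-strict bound.
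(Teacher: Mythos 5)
The paper does not prove this lemma at all: it is quoted verbatim from Marshall--Olkin \cite{Mar1979} as a known result (the strict Karamata/majorization inequality), so there is no in-paper argument to compare against. Your proof is a correct, self-contained derivation. The summation-by-parts computation is the standard route to the weak inequality, and your handling of the chord slopes $c_i$ is sound (the only delicate point is the choice of $c_i$ when $a_i=b_i$; any value in $[D^-\zeta(a_i),D^+\zeta(a_i)]$ preserves the monotone chain, and one-sided derivatives of a convex function do lie there). The strictness argument via the Hardy--Littlewood--P\'olya transfer characterization is also correct: each nontrivial Robin--Hood transfer strictly decreases $\sum_i\zeta(\cdot)$ by the two-point application of strict convexity you give, all intermediate values stay inside the convex hull of the original entries (so $\zeta$ is defined on them), and $\pi\neq\pi'$ forces at least one nontrivial transfer. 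Note that the transfer argument alone already yields the full strict conclusion, so the Abel-summation half is, as you say, redundant; if brevity mattered you could drop it. The one thing you import as a black box is the transfer lemma itself, which is of comparable depth to the statement being proved, but it is a legitimate citation to the same reference the paper uses.
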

\begin{lemma}\label{12.6l}
Let $\alpha>1$ and $n\geq 3$. Then\\
(i). $r_\alpha(2n,2)-r_\alpha(2n-2,4)>0;$\\
(ii). $r_\alpha(6,2)+r_\alpha(2,2)-2r_\alpha(4,2)>0.$
\end{lemma}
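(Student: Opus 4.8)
The plan is to reduce both inequalities to convexity statements about the power function $x\mapsto x^{\alpha}$, exploiting that $\alpha>1$ makes it strictly convex.

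For part (i), I would first note that $x\mapsto x^{1/\alpha}$ is strictly increasing (as $\alpha>0$), so the claim $r_\alpha(2n,2)>r_\alpha(2n-2,4)$ is equivalent to $(2n)^\alpha+2^\alpha>(2n-2)^\alpha+4^\alpha$. I would then invoke the majorization machinery already set up: for $n\geq 3$ the sequences $\pi=(2n-2,4)$ and $\pi'=(2n,2)$ are both non-increasing (here $2n-2\geq 4$ needs $n\geq 3$), have equal sum $2n+2$, and satisfy $2n-2<2n$, so $\pi\lhd\pi'$. Applying Lemma~\ref{15.1l} with the strictly convex function $\zeta(x)=x^\alpha$ (strict convexity holds precisely because $\alpha>1$) yields $(2n-2)^\alpha+4^\alpha<(2n)^\alpha+2^\alpha$, and raising to the power $1/\alpha$ finishes (i). Equivalently, one can show the difference $h(x)=x^\alpha-(x-2)^\alpha$ is increasing on $[2,\infty)$ via $h'(x)=\alpha\big(x^{\alpha-1}-(x-2)^{\alpha-1}\big)>0$ and compare $h(2n)$ with $h(4)$.

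For part (ii), the key observation is that the left-hand side is exactly the second difference of the single-variable function $\phi(s):=r_\alpha(s,2)=(s^\alpha+2^\alpha)^{1/\alpha}$ at the three equally spaced points $s=2,4,6$; that is, $r_\alpha(6,2)+r_\alpha(2,2)-2r_\alpha(4,2)=\phi(6)+\phi(2)-2\phi(4)$, with $4$ the midpoint of $2$ and $6$. I would therefore prove that $\phi$ is strictly convex on $(0,\infty)$ by computing its second derivative. Writing $u=s^\alpha+2^\alpha$, a direct computation gives $\phi''(s)=(\alpha-1)\,2^\alpha\,s^{\alpha-2}\,u^{1/\alpha-2}$, which is strictly positive for all $s>0$ since $\alpha-1>0$ and $2^\alpha, s^{\alpha-2}, u$ are all positive. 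Strict convexity (the defining inequality (\ref{15.1e}) with $\mu=\tfrac12$, $x_1=2$, $x_2=6$) then gives $\phi(4)<\tfrac12(\phi(2)+\phi(6))$, i.e. $\phi(6)+\phi(2)-2\phi(4)>0$, which is (ii).

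The main obstacle is part (ii): one must recognize that the naive route through Lemma~\ref{15.1l} with $\zeta(x)=x^\alpha$ does \emph{not} prove it, because $r_\alpha$ couples the two coordinates of a degree-point nonlinearly (that route would only deliver $2\cdot 4^\alpha<6^\alpha+2^\alpha$, a statement about the inner power rather than about $r_\alpha$ itself). The substantive step is thus the second-derivative computation establishing convexity of $\phi$, where the simplification $u-s^\alpha=2^\alpha>0$ is precisely what forces the correct sign; care is needed there, while the remaining manipulations are routine.
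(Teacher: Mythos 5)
Your proof is correct and follows essentially the same route as the paper: part (i) via majorization of $(2n-2,4)\lhd(2n,2)$ with the strictly convex function $x^\alpha$ and Lemma~\ref{15.1l}, and part (ii) via the second-derivative computation showing $g(x)=(x^\alpha+2^\alpha)^{1/\alpha}$ is strictly convex, then the midpoint inequality at $2,4,6$. Your side remark that convexity of $x^\alpha$ alone does not suffice for (ii) correctly identifies why the separate convexity argument for $g$ is needed, which is exactly what the paper does.
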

\begin{proof}
Since $\alpha>1$ and $n\geq 3$, then by Lemma \ref{15.1l}, we have $(2n)^{\alpha}+2^{\alpha}>(2n-2)^{\alpha}+4^{\alpha}$. Hence (i) holds clearly.
Let $g(x)=r_\alpha(x,2)=(x^\alpha+2^\alpha)^{1/\alpha}$, where $x>0$ and $\alpha>1$. Since $g^{''}(x)=\frac{(\alpha-1)2^{\alpha}x^{\alpha-2}}{(x^\alpha+2^\alpha)^{2-1/\alpha}}>0$, then $g(x)$ is strictly  convex. Then by Lemma \ref{15.1l}, $g(6)+g(2)>2g(4)$. Hence (ii) also holds.
\end{proof}
\begin{lemma}\label{13.2l}  Let $\alpha \neq0$ and $G\in\mathcal{G}_{n,k}$, where $n\geq 1$ and $k\geq 3$. Then
$$ SO_{\alpha}(G)=(4n-4)(2^\alpha+4^\alpha)^{1/\alpha}+2(nk-4n+4)2^{1/\alpha}$$
$$~~~~~~+\left(6\times2^{1/\alpha}-2(2^\alpha+4^\alpha)^{1/\alpha}\right)n_{4,4}
+\sum_{(s,t)\in Y}\eta(s,t;\alpha)n_{s,t},$$
 where $\eta(s,t;\alpha)=(s^{\alpha}+t^{\alpha})^{1/\alpha}-2\left(\frac{1}{s}+\frac{1}{t}\right)2^{1/\alpha}$.
\end{lemma}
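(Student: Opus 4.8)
The plan is to evaluate $SO_\alpha(G)$ by grouping the edges according to the degrees of their endpoints and then eliminating the multiplicities of the low-degree edge types by counting identities. Writing $SO_\alpha(G)=\sum_{uv\in E(G)}r_\alpha(d(u),d(v))=\sum_{(s,t)\in X}n_{s,t}\,r_\alpha(s,t)$, I would first split off the three distinguished pairs $(2,2),(2,4),(4,4)$ from the remaining pairs in $Y$, recording the simplifications $r_\alpha(2,2)=(2\cdot 2^\alpha)^{1/\alpha}=2\cdot 2^{1/\alpha}$ and $r_\alpha(4,4)=4\cdot 2^{1/\alpha}$, while $r_\alpha(2,4)=(2^\alpha+4^\alpha)^{1/\alpha}$ is kept as is. The role of the function $\eta$ becomes clear here: by its very definition $r_\alpha(s,t)=\eta(s,t;\alpha)+2\cdot 2^{1/\alpha}\bigl(\tfrac1s+\tfrac1t\bigr)$, so this algebraic rewriting is exactly what will manufacture the $\eta$-terms in the final formula.

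Next I would invoke two elementary identities valid for every $G\in\mathcal{G}_{n,k}$. The first is the edge count $\sum_{(s,t)\in X}n_{s,t}=|E(G)|=nk$. The second is the reciprocal-degree identity $\sum_{(s,t)\in X}n_{s,t}\bigl(\tfrac1s+\tfrac1t\bigr)=\sum_{uv\in E(G)}\bigl(\tfrac1{d(u)}+\tfrac1{d(v)}\bigr)=|V(G)|=nk-n+1$, obtained by summing $\tfrac1{d(v)}$ over the $d(v)$ edges incident with each $v$. Feeding the second identity into the rewriting above gives the compact form $SO_\alpha(G)=\sum_{(s,t)\in X}\eta(s,t;\alpha)\,n_{s,t}+2\cdot 2^{1/\alpha}(nk-n+1)$, and since a one-line check shows $\eta(2,2;\alpha)=r_\alpha(2,2)-2\cdot 2^{1/\alpha}=0$, the $(2,2)$-edges drop out of the sum completely. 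At this stage only the $(2,4)$- and $(4,4)$-contributions still need to be merged into the explicit constant and the coefficient of $n_{4,4}$.

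The key structural input, and the step I expect to be the main obstacle, is the count of the edges joining a degree-$2$ and a degree-$4$ vertex, namely the relation $n_{2,4}=4n-4-2n_{4,4}$. I would derive this from the cactus structure, using that $v$ is a cut vertex precisely when $d(v)\ge 4$: in the block--cut-tree each degree-$4$ vertex lies in exactly two polygons, so tallying the $4n_4$ edge-ends at degree-$4$ vertices as $2n_{4,4}+n_{2,4}$ and combining with the block--cut-tree count $n_4=n-1$ yields the relation; this is the genuinely graph-theoretic point, whereas everything else is bookkeeping. Substituting this expression for $n_{2,4}$ splits the term $\eta(2,4;\alpha)n_{2,4}$ into a constant $(4n-4)\eta(2,4;\alpha)$ and a contribution $-2\eta(2,4;\alpha)n_{4,4}$. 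Merging the constants $2\cdot 2^{1/\alpha}(nk-n+1)+(4n-4)\eta(2,4;\alpha)$ into $(4n-4)(2^\alpha+4^\alpha)^{1/\alpha}+2(nk-4n+4)2^{1/\alpha}$, and merging $-2\eta(2,4;\alpha)n_{4,4}$ with $\eta(4,4;\alpha)n_{4,4}$ into $\bigl(6\cdot 2^{1/\alpha}-2(2^\alpha+4^\alpha)^{1/\alpha}\bigr)n_{4,4}$, then leaves exactly $\sum_{(s,t)\in Y}\eta(s,t;\alpha)n_{s,t}$ untouched, which is the claimed identity. The only delicate computations are the clean evaluations of $\eta(2,4;\alpha)$ and $\eta(4,4;\alpha)$ and the arithmetic collapsing the constant, both routine once the $(2,4)$-edge count is secured.
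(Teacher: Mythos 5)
Your bookkeeping is correct, and in fact cleaner than the paper's, up to the identity $SO_\alpha(G)=2\cdot 2^{1/\alpha}(nk-n+1)+\sum_{(s,t)\in X}\eta(s,t;\alpha)\,n_{s,t}$ together with the observation $\eta(2,2;\alpha)=0$; this is in substance the same elimination the paper performs from the system (\ref{13.3e}). The step that fails is precisely the one you flag as the crux: the relation $n_{2,4}=4n-4-2n_{4,4}$ is \emph{false} for general $G\in\mathcal{G}_{n,k}$, and both ingredients of your derivation break down as soon as some cut vertex has degree at least $6$. A cut vertex of a $k$-polygonal cactus may have any even degree $\geq 4$, so the number $n_4$ of degree-$4$ vertices is not $n-1$ (the block--cut-tree count is $\sum_{v}\bigl(d(v)/2-1\bigr)=n-1$ summed over cut vertices, which gives $n_4=n-1$ only when every cut vertex has degree $4$), and an edge-end at a degree-$4$ vertex may lead to a vertex of degree $\geq 6$, so the tally $4n_4=2n_{4,4}+n_{2,4}$ also fails. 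Concretely, for $G=S_{3,3}$ one has $n_{2,4}=n_{4,4}=0$ while $4n-4-2n_{4,4}=8$. What actually follows from your two counting identities is $n_{2,4}=4n-4-2n_{4,4}-4\sum_{(s,t)\in Y}\bigl(1-\tfrac1s-\tfrac1t\bigr)n_{s,t}$, so your substitution silently discards the correction term $-4\,\eta(2,4;\alpha)\sum_{(s,t)\in Y}\bigl(1-\tfrac1s-\tfrac1t\bigr)n_{s,t}$, which vanishes exactly when $G$ is a chemical cactus.

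You should know that this is not a defect you introduced: the paper's own proof makes the equivalent slip. The second line of the unlabeled system between (\ref{13.3e}) and (\ref{13.5e}) writes $\sum_{(s,t)\in Y}\bigl(\tfrac1s+\tfrac1t\bigr)n_{s,t}$ where the edge count gives $\sum_{(s,t)\in Y}n_{s,t}$, and this is what produces (\ref{13.5e}). As a consequence Lemma \ref{13.2l} as stated is itself incorrect: for $S_{3,3}$ its right-hand side exceeds $SO_\alpha(S_{3,3})$ by $8\,\eta(2,4;\alpha)=8\bigl((2^\alpha+4^\alpha)^{1/\alpha}-\tfrac32\,2^{1/\alpha}\bigr)$, which is nonzero for, e.g., $\alpha=2$. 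The stated formula is valid precisely when $n_{s,t}=0$ for all $(s,t)\in Y$, i.e.\ for chemical cacti, which is the only situation in which the lemma is later invoked (Corollary \ref{13.1c}). The honest repair is either to restrict the statement to chemical cacti, or to keep the correct intermediate identity $SO_\alpha(G)=2\cdot 2^{1/\alpha}(nk-n+1)+\sum_{(s,t)\in X}\eta(s,t;\alpha)\,n_{s,t}$ with the appropriately modified coefficients on $Y$; as written, your proposal proves the stated formula only under the same hidden assumption that every cut vertex has degree $4$.
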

\begin{proof}  By the definition of $n_{s,t}$, it is not difficult to see that
\begin{equation}\label{13.3e}
\left\{\begin{array}{ccl}
nk-n+1&=&\sum\limits_{(s,t)\in X}\left(\frac{1}{s}+\frac{1}{t}\right)n_{s,t},\\
nk&=&\sum\limits_{(s,t)\in X}n_{s,t}
\end{array}
\right.
\end{equation}
because each vertex of $G$ contributes 1 to each of the two sides in the first equation and  each edge of $G$ contributes 1 to each of the two sides in the second equation.  Write (\ref{13.3e}) as
\begin{equation}
\left\{\begin{array}{ccl}
4n_{2,2}+3n_{2,4}&=&4(nk-n+1)-2n_{4,4}-4\sum\limits_{(s,t)\in Y}\left(\frac{1}{s}+\frac{1}{t}\right)n_{s,t},\\
n_{2,2}+n_{2,4}&=&nk-n_{4,4}-\sum\limits_{(s,t)\in Y}\left(\frac{1}{s}+\frac{1}{t}\right)n_{s,t}.
\end{array}
\right.
\end{equation}
Therefore,
\begin{equation}\label{13.5e}
\left\{\begin{array}{ccl}
n_{2,4}&=&4n-4-2n_{4,4},\\
n_{2,2}&=&nk-4n+4+n_{4,4}-\sum\limits_{(s,t)\in Y}\left(\frac{1}{s}+\frac{1}{t}\right)n_{s,t}.
\end{array}
\right.
\end{equation}

Consequently, by (\ref{13.5e}) we have
\begin{eqnarray*}
SO_{\alpha}(G)&= &(4^\alpha+4^\alpha)^{1/\alpha}n_{4,4}+(2^\alpha+4^\alpha)^{1/\alpha}n_{2,4}
+(2^\alpha+2^\alpha)^{1/\alpha}n_{2,2}\\
 &~~+
 &\sum_{(s,t)\in Y}(s^\alpha+t^\alpha)^{1/\alpha}n_{s,t}\\
 &=
 &(4n-4)(2^\alpha+4^\alpha)^{1/\alpha}+2(nk-4n+4)2^{1/\alpha}\\
 &~~+
 &\left(6\times2^{1/\alpha}-2(2^\alpha+4^\alpha)^{1/\alpha}\right)n_{4,4}\\
 &~~+
 &\sum_{(s,t)\in Y}\left((s^{\alpha}+t^{\alpha})^{1/\alpha}-2\left(\frac{1}{s}+\frac{1}{t}\right)2^{1/\alpha}\right)n_{s,t}.
\end{eqnarray*}
\end{proof}

 For $\alpha\neq0$ and positive integer $p$, let  $\delta_{\alpha,p}(s,t)=\left((s+p)^{\alpha}+t^{\alpha}\right)^{1/\alpha}-\left(s^{\alpha}+t^{\alpha}\right)^{1/\alpha}$, where $s, t>0$.
\begin{lemma}\label{13.3l} If $s, t>0$ and $p$ is an arbitrary positive integer, then\\
(i). $r_{\alpha}(s,t;\beta)$ strictly increases in $s$ for fixed $t$, and in $t$ for fixed $s$ when $\alpha, \beta>0 $;\\
(ii). $\delta_{\alpha,p}(s,t)>0$ and $\delta_{\alpha,p}(s,t)$ strictly decreases in $t$ for fixed $s$ when $\alpha>1$;\\
(iii). $\delta_{\alpha,p}(s,t)$ strictly increases in $s$ for fixed $t$ when $\alpha>1$.
\end{lemma}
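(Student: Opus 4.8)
The plan is to dispatch part (i) by a single first-derivative computation, and to reduce parts (ii) and (iii) to sign computations of second-order partial derivatives of the one-variable slice $h(a,t)=(a^\alpha+t^\alpha)^{1/\alpha}$, observing that $\delta_{\alpha,p}(s,t)=h(s+p,t)-h(s,t)$.

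For (i), since $r_\alpha(s,t;\beta)=(s^\alpha+t^\alpha)^\beta$, I would differentiate with respect to $s$ to get $\partial_s r_\alpha(s,t;\beta)=\alpha\beta\, s^{\alpha-1}(s^\alpha+t^\alpha)^{\beta-1}$. For $s,t>0$ and $\alpha,\beta>0$ every factor is strictly positive, so $r_\alpha(\cdot,t;\beta)$ strictly increases in $s$; monotonicity in $t$ follows from the symmetry $r_\alpha(s,t;\beta)=r_\alpha(t,s;\beta)$. This part is routine.

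For (ii) and (iii), the strict positivity $\delta_{\alpha,p}(s,t)>0$ is immediate: $s+p>s$ gives $(s+p)^\alpha>s^\alpha$ since $x\mapsto x^\alpha$ is increasing for $\alpha>0$, hence $(s+p)^\alpha+t^\alpha>s^\alpha+t^\alpha$, and $x\mapsto x^{1/\alpha}$ is increasing as well. For the monotonicity statements I would differentiate through the difference. Differentiating in $t$ gives $\partial_t\delta_{\alpha,p}(s,t)=h_t(s+p,t)-h_t(s,t)$, so it suffices to show $h_t(a,t)$ is strictly decreasing in $a$; a direct computation yields $h_{at}(a,t)=(1-\alpha)\,a^{\alpha-1}t^{\alpha-1}(a^\alpha+t^\alpha)^{1/\alpha-2}$, which is negative exactly when $\alpha>1$ (equivalently, one may use the tidy form $h_t(a,t)=\bigl(t^\alpha/(a^\alpha+t^\alpha)\bigr)^{(\alpha-1)/\alpha}$, whose base decreases in $a$ while the exponent $(\alpha-1)/\alpha$ is positive). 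Then $s+p>s$ gives $h_t(s+p,t)<h_t(s,t)$, i.e. $\partial_t\delta_{\alpha,p}<0$, proving (ii). Symmetrically, $\partial_s\delta_{\alpha,p}(s,t)=h_a(s+p,t)-h_a(s,t)$, so (iii) reduces to showing $h_a(a,t)$ is strictly increasing in $a$, that is, $h$ is strictly convex in its first slot; computing $h_{aa}(a,t)=(\alpha-1)\,a^{\alpha-2}t^\alpha(a^\alpha+t^\alpha)^{1/\alpha-2}>0$ for $\alpha>1$ gives $h_a(s+p,t)>h_a(s,t)$ and hence $\partial_s\delta_{\alpha,p}>0$.

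The main obstacle is purely bookkeeping: each claim about the difference $h(s+p,t)-h(s,t)$ must be converted into a claim about a partial derivative of $h$ evaluated at the two first-arguments $s+p$ and $s$, which is legitimate precisely because $h_t$ (resp. $h_a$) is monotone in $a$. Care is needed to carry the exponents $1/\alpha-1$ and $1/\alpha-2$ correctly through the chain rule and to factor out the common positive terms so that the sign is governed solely by $(1-\alpha)$ in the mixed partial and by $(\alpha-1)$ in the second partial. No deeper difficulty arises; once the signs of $h_{at}$ and $h_{aa}$ are pinned down, the single hypothesis $\alpha>1$ delivers both directions of monotonicity.
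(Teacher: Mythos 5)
Your proof is correct, and it follows the same basic strategy as the paper (differentiate and check signs), but the two arguments are packaged differently enough to be worth noting. The paper first reduces to unit increments via the telescoping identity $\delta_{\alpha,p}(s,t)=\sum_{i=0}^{p-1}\delta_{\alpha,1}(s+i,t)$, then computes $\partial_t\delta_{\alpha,1}$ and $\partial_s\delta_{\alpha,1}$ explicitly and settles their signs by comparing two explicit power expressions: for (ii) it uses that $x\mapsto x^{1/\alpha-1}$ is decreasing (exponent in $(-1,0)$), and for (iii) it puts $\partial_s\delta_{\alpha,1}$ over a common denominator and compares $\bigl((s+1)^{\alpha}s^{\alpha}+(s+1)^{\alpha}t^{\alpha}\bigr)^{1-1/\alpha}$ with $\bigl((s+1)^{\alpha}s^{\alpha}+s^{\alpha}t^{\alpha}\bigr)^{1-1/\alpha}$. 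You instead keep general $p$ throughout, writing $\delta_{\alpha,p}(s,t)=h(s+p,t)-h(s,t)$, and deduce the sign of $\partial_t\delta_{\alpha,p}$ and $\partial_s\delta_{\alpha,p}$ from the second-order partials $h_{at}<0$ and $h_{aa}>0$; your computations of these (and of the first derivative in part (i)) check out. What your route buys is that the telescoping identity becomes unnecessary and the structural reason for the monotonicity is made explicit ($h$ is convex in its first slot and $h_t$ is decreasing in the first slot); what the paper's route buys is that it never needs second derivatives, only the monotonicity of $x\mapsto x^{c}$ for the relevant exponents $c$. Both are complete proofs of the lemma as stated.
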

\begin{proof}  (i) follows directly since $\alpha, \beta>0$.

Since $-1<1/\alpha-1<0$ when $\alpha>1$,
$$\frac{\partial \delta_{\alpha,1}(s,t)}{\partial t}=t^{\alpha-1}\big(\left((s+1)^{\alpha}+t^{\alpha}\right)^{1/\alpha-1}-\left(s^{\alpha}+t^{\alpha}\right)^{1/\alpha-1}\big)<0.$$
We also note that $\delta_{\alpha,1}(s,t)>0$, hence (ii) follows as $\delta_{\alpha,p}(s,t)=\sum^{p-1}_{i=0}\delta_{\alpha,1}(s+i,t)$.

Finally,  since $1-1/\alpha>0$ when $\alpha>1$,
$$\frac{\partial \delta_{\alpha,1}(s,t)}{\partial s}=\frac{\big((s+1)^{\alpha}s^{\alpha}+(s+1)^{\alpha}t^{\alpha}\big)^{1-1/\alpha}-
\big((s+1)^{\alpha}s^{\alpha}+s^{\alpha}t^{\alpha}\big)^{1-1/\alpha} }
{\big(((s+1)^{\alpha}+t^{\alpha})(s^{\alpha}+t^{\alpha})\big)^{1-1/\alpha} }>0.$$
Hence (iii) follows as $\delta_{\alpha,p}(s,t)=\sum^{p-1}_{i=0}\delta_{\alpha,1}(s+i,t)$.
\end{proof}

 The {\em distance} $d_G(u, v)$ between two vertices $u$ and $v$ of a connected  graph $G$ is defined as usual as the
length of a shortest path that connects $u$ and $v$. In general, for two subgraphs $G_1$ and $G_2$ of $G$, we define the distance  between $G_1$ and $G_2$ by $d_G(G_1,G_2)=\min\{d_G(u,v):u\in V(G_1),v\in V(G_2)\}$. For $n\geq 2$, a {\em star-like cactus} $S_{n,k}$ is defined intuitively as a $k$-polygonal cactus such that all polygons have a vertex in common. It is clear that $S_{n,k}$ is unique  and contains  exactly one vertex of  degree  $2n$  while all other vertices have degree two.

\begin{lemma}\label{13.4l}Let $\alpha>1$ and $G\in \mathcal{G}_{n,k}$, where  $n\geq 3$ and  $k\geq3$. If $G$ contains a vertex of degree at least 6, then $SO_\alpha(G)$ is not minimum in $\mathcal{G}_{n,k}$.
\end{lemma}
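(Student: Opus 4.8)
The plan is to show that $G$ cannot be extremal by exhibiting a local surgery that keeps us inside $\mathcal{G}_{n,k}$ while strictly decreasing $SO_\alpha$. Since it suffices to produce one improving move, I am free to choose $v$ to be a maximum-degree vertex, $2m=\Delta(G)\ge 6$, that moreover has a neighbour $a$ with $d(a)<2m$ (such a vertex exists: a short degree count using $k\ge 3$ shows $G$ has degree-$2$ vertices, so the maximum-degree vertices form a proper subset of the connected graph $G$, and I prefer an $a$ with $d(a)=2$ whenever one is available). Let $C$ be a polygon through $va$ and let $C'\neq C$ be another polygon through $v$, with $p,q$ the two neighbours of $v$ on $C'$. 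Form $G'$ by deleting $vp,vq$ and adding $ap,aq$; intuitively this ``transfers'' the polygon $C'$ from $v$ to $a$. Because $C'$ met the rest of $G$ only at $v$ and now meets it only at $a\notin C'$, connectivity and the cactus structure are preserved, so $G'\in\mathcal{G}_{n,k}$, and the only degrees that change are $d(v)\mapsto 2m-2$ and $d(a)\mapsto d(a)+2$ (note $d(a)+2\le 2m$).

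Next I would write $\Delta:=SO_\alpha(G')-SO_\alpha(G)$ as a sum over the edges whose endpoint degrees changed and split it into four groups: (1) the edge $va$, contributing $r_\alpha(2m-2,d(a)+2)-r_\alpha(2m,d(a))$; (2) the two transferred edges, contributing $\sum_{x\in\{p,q\}}\big(r_\alpha(d(a)+2,d(x))-r_\alpha(2m,d(x))\big)$; (3) the remaining $2m-3$ edges at $v$, each contributing $r_\alpha(2m-2,d(c))-r_\alpha(2m,d(c))$; and (4) the $d(a)-1$ remaining edges at $a$, each contributing $r_\alpha(d(a)+2,d(y))-r_\alpha(d(a),d(y))$. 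Group (1) is nonpositive by Lemma~\ref{12.6l}(i) (equivalently, $(2m,d(a))$ majorises $(2m-2,d(a)+2)$, so Lemma~\ref{15.1l} applies to $x^\alpha$), while groups (2) and (3) are strictly negative because $d(a)+2\le 2m$ and $2m-2<2m$ together with the monotonicity in Lemma~\ref{13.3l}(i). Only group (4) is positive.

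The heart of the argument is to show group (4) is outweighed. Taking $d(a)=2$ collapses (4) to the single term $r_\alpha(4,d(y))-r_\alpha(2,d(y))=\delta_{\alpha,2}(2,d(y))\le\delta_{\alpha,2}(2,2)$, where the bound uses that $\delta_{\alpha,2}(2,\cdot)$ decreases (Lemma~\ref{13.3l}(ii)) and $d(y)\ge 2$. I would then cancel it against one term of group (3): if $v$ has a degree-$2$ neighbour $c\neq a$, that term equals $-\delta_{\alpha,2}(2m-2,2)$, and $\delta_{\alpha,2}(2m-2,2)\ge\delta_{\alpha,2}(4,2)>\delta_{\alpha,2}(2,2)$ by Lemma~\ref{13.3l}(iii); in the tight case $m=3$ one instead invokes Lemma~\ref{12.6l}(ii), which is exactly the inequality $r_\alpha(6,2)+r_\alpha(2,2)-2r_\alpha(4,2)>0$ needed to pair the lone positive edge with one negative edge. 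All the remaining terms of (1)–(3) only deepen the deficit, giving $\Delta<0$.

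The main obstacle is this final balancing when the favourable structure is absent, i.e. when $v$ has at most one degree-$2$ neighbour, forcing $d(a)\ge 4$ and making several edges at $a$ increase. Here naive term matching fails because comparisons of $\delta_{\alpha,p}$ at different second arguments can reverse (for instance one may have $\delta_{\alpha,2}(4,4)<\delta_{\alpha,2}(2,2)$), so the resolution must exploit the \emph{multiplicity} of negative edges at $v$ (the $2m-3\ge 3$ edges of group (3), plus the two transferred edges and $va$) together with the maximality of $d(v)$, which caps every neighbour degree by $2m$ and so controls the positive terms. I expect this degree-bookkeeping, organised entirely by the monotonicity in Lemma~\ref{13.3l} and the two inequalities in Lemma~\ref{12.6l}, to be the only delicate part; once $\Delta<0$ is established, $SO_\alpha(G')<SO_\alpha(G)$ shows that $G$ is not minimum in $\mathcal{G}_{n,k}$.
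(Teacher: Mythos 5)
Your strategy---transplant a polygon away from a high-degree vertex and compare edge by edge using Lemmas \ref{13.3l} and \ref{12.6l}---is the same kind of argument the paper makes, and your bookkeeping in the branch $d(a)=2$ is essentially the paper's computation. But the proof has a genuine gap exactly where you flag it, and it is not a bookkeeping issue: when every neighbour of $v$ has degree at least $4$, so that you are forced to take $d(a)\ge 4$, your move can strictly \emph{increase} $SO_\alpha$. Concretely, take $k=4$, $d(v)=2m=6$, all six neighbours of $v$ of degree $4$, and let each of those neighbours have all its non-$v$ neighbours of degree $2$ (realizable in $\mathcal{G}_{n,k}$ for suitable $n$). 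Then group (1) is $r_\alpha(4,6)-r_\alpha(6,4)=0$, group (2) is $2\big(r_\alpha(6,4)-r_\alpha(6,4)\big)=0$, and
\[
\Delta=\sum_{y}\delta_{\alpha,2}\big(4,d(y)\big)-\sum_{c}\delta_{\alpha,2}\big(4,d(c)\big)=3\big(\delta_{\alpha,2}(4,2)-\delta_{\alpha,2}(4,4)\big)>0
\]
by Lemma \ref{13.3l}(ii), since the three remaining neighbours $y$ of $a$ have degree $2$ while the three remaining neighbours $c$ of $v$ have degree $4$. Every admissible choice of $a$ and of the transferred polygon gives the same sign here, so no refinement of the multiplicity argument you sketch can rescue this surgery.

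The paper sidesteps this by never re-attaching the polygon at a neighbour of the high-degree vertex: it re-attaches at a degree-$2$ vertex $v_2$ of a \emph{pendent} polygon chosen at maximum distance (and, when two vertices of degree $\ge 6$ exist, it first picks $u_1$ and $w$ at maximum distance so that the detached neighbours $u_2,u_k$ are guaranteed to have degree at most $4$). With that choice the only increasing terms are $r_\alpha(2,2)\to r_\alpha(4,2)$ and $r_\alpha(2,2a)\to r_\alpha(4,2a)$, which are controlled by the decreasing property of $\delta_{\alpha,p}(s,\cdot)$ and are dominated by the four large decreasing terms; the star $S_{n,k}$, where no distant pendent polygon exists, is treated separately via Lemma \ref{12.6l}(ii). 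To repair your argument, replace ``a neighbour $a$ of $v$ of smallest degree'' by such a distant degree-$2$ vertex on a pendent polygon and add the separate star case; as written, the case $d(a)\ge 4$ is a counterexample to your proposed inequality rather than a delicate remainder.
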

\begin{proof} Suppose to the contrary that $G$ contains $q$ ($q\geq 1$) vertices of degree at least 6 and $SO_\alpha(G)$ is  minimum in $\mathcal{G}_{n,k}$.\\
{\bf Case 1. $q\geq 2$.}

Let $u_1$ and  $w$ be two  vertices of degree at least 6 such that $d_G(u_1,w)$ is maximum and let $P$ be a shortest path connecting $u_1$ and $w$. Since $d_G(u_1)\geq 6$, $u_1$ is contained in at least three polygons, exactly one of which, say $C$, has at least two common vertices with $P$. Let $C_1=u_1u_2 \cdots u_ku_1$ and $C_2=u_1z_2 \cdots z_ku_1$  be two  polygons other than $C$ that contain $u_1$  as a common vertex. Since $d_G(u_1,w)$ is maximum, we have $d_G(v)\leq 4$ for every $v\in\{u_2, u_k, z_2,z_k\}$. Let $C_3=v_1v_2 \cdots v_kv_1$ be a pendent polygon that lies in the same component with $w$ in $G-u_1$ and the distance $d_G(u_1,C_3)$ is as large as possible, where $d_G(v_1)=2a\geq 4$ and $d_G(v_2)=d_G(v_3)=2$.

Without loss of generality, assume $d_G(u_1)=2b\geq d_G(w)\geq 6$. Let $G'=G-u_1u_2-u_1u_k+v_2u_2+v_2u_k$. Then by Lemma \ref{13.3l}, we have

\begin{eqnarray*}
SO_\alpha(G)-SO_\alpha(G')&>
  &\big(r_\alpha(2b,d(u_2))- r_\alpha(4,d(u_2)\big)+
  \big(r_\alpha(2b,d(u_k))- r_\alpha(4,d(u_k)\big)\\
  &&+\big(r_\alpha(2,2)- r_\alpha(4,2)\big)+\big(r_\alpha(2,2a)- r_\alpha(4,2a)\big)\\
  &&+\big(r_\alpha(2b,d(z_2))- r_\alpha(2b-2,d(z_2))\big)\\
  &&+\big(r_\alpha(2b,d(z_k))- r_\alpha(2b-2,d(z_k))\big)\\
  &>
  &\big(r_\alpha(6,4)- r_\alpha(4,4)\big)+\big(r_\alpha(6,4)- r_\alpha(4,4)\big)\\
  &&+\big(r_\alpha(2,2)- r_\alpha(4,2)\big)+\big(r_\alpha(2,4)- r_\alpha(4,4)\big)\\
  &&+\big(r_\alpha(6,4)- r_\alpha(4,4)\big)+\big(r_\alpha(6,4)- r_\alpha(4,4)\big)\\
  &=
  &8(3^{\alpha}+2^{\alpha})^{1/\alpha}-18\times2^{1/\alpha}\\
  &>
  &8\sqrt{6}\times2^{1/\alpha}-18\times2^{1/\alpha}\\
  &>
  &0,
\end{eqnarray*}
which contradicts the minimality  of $G$.

\noindent
{\bf Case 2. $q= 1$.}

If $G \not \cong S_{n,k}$, then the discussion for this case is similar to that for Case 1 by choosing $u_1$ to be the vertex with degree at least 6 and $C_3=v_1v_2 \cdots v_kv_1$ to be a pendent polygon such that $d_G(u_1,C_3)$ is maximum. Otherwise, $G \cong S_{n,k}$. Let $u_1$  be the vertex with degree $2n\geq 6$, $C_1=u_1u_2 \cdots u_ku_1$ and $C_2=u_1z_2 \cdots z_ku_1$  be two  pendent polygons. Let $G'=G-u_1u_2-u_1u_k+z_2u_2+z_2u_k$. Then by Lemma \ref{13.3l} and
Lemma \ref{12.6l}, we have
\begin{eqnarray*}
SO_\alpha(G)-SO_\alpha(G')&=
  &\big( 2n\times r_\alpha(2n,2)+(nk-2n)\times r_\alpha(2,2) \big)\\
  &&-\big( (2n-3)\times r_\alpha(2n-2,2)+r_\alpha(2n-2,4) \\
  &&+3r_\alpha(4,2)+(nk-2n-1)r_\alpha(2,2) \big)\\
  &=
  &2n\times r_\alpha(2n,2)+r_\alpha(2,2)- (2n-3)\times r_\alpha(2n-2,2)\\
  &&-r_\alpha(2n-2,4)-3r_\alpha(4,2) \\
  &>
  &3r_\alpha(2n,2)+r_\alpha(2,2)-r_\alpha(2n-2,4)-3r_\alpha(4,2)\\
  &>
  &2r_\alpha(2n,2)+r_\alpha(2,2)-3r_\alpha(4,2)\\
  &>
  &r_\alpha(6,2)+r_\alpha(2,2)-2r_\alpha(4,2)\\
  &>
  &0,
\end{eqnarray*}
which contradicts the minimality  of $G$.

\end{proof}

Recall that a graph is called a chemical graph if it has no vertex of degree more than 4. For $G\in\mathcal{G}_{n,k}$, we call $G$ a {\it chemical $(n,k)$-cactus}, or {\it chemical cactus} for short, if $G$ has no vertex of degree greater than 4. It is clear that every cut vertex in a chemical cactus has degree 4, which connects exactly two polygons.  The following corollary follows directly from Lemma \ref{13.2l} and Lemma \ref{13.4l}, which shows that the minimum value of  $SO_\alpha(G)$ among all cacti in $\mathcal{G}_{n,k}$ is attained only by  chemical cacti.
 \begin{corollary}\label{13.1c} For $\alpha>1$, $n\geq 3,k\geq 3$ and $G\in \mathcal{G}_{n,k}$, if $G$ attains the minimum value of $SO_\alpha(G)$, then $G$ is a chemical cactus and
 $$ SO_{\alpha}(G)=(4n-4)(2^\alpha+4^\alpha)^{1/\alpha}+2(nk-4n+4)2^{1/\alpha}
+\left(6\times2^{1/\alpha}-2(2^\alpha+4^\alpha)^{1/\alpha}\right)n_{4,4}(G).$$
 \end{corollary}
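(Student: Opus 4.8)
The plan is to combine the structural restriction of Lemma~\ref{13.4l} with the explicit edge-count formula of Lemma~\ref{13.2l}. First I would invoke the contrapositive of Lemma~\ref{13.4l}: if $G\in\mathcal{G}_{n,k}$ attains the minimum value of $SO_\alpha(G)$ for $\alpha>1$ and $n,k\ge 3$, then $G$ cannot contain a vertex of degree at least $6$. Here it is essential to recall that in any $k$-polygonal cactus every block is a cycle, so each vertex lies on an even number of edge-ends and hence has even degree, with minimum degree equal to $2$. Consequently a vertex of degree less than $6$ must have degree $2$ or $4$, and so every vertex of an extremal $G$ has degree in $\{2,4\}$; by definition this makes $G$ a chemical cactus.

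Second, I would observe what this degree restriction does to the edge-type counters. Every edge of $G$ joins two vertices whose degrees lie in $\{2,4\}$, so the only degree-pairs that can occur are $(2,2)$, $(2,4)$ and $(4,4)$. Since $Y=X\setminus\{(2,2),(2,4),(4,4)\}$, this forces $n_{s,t}(G)=0$ for every $(s,t)\in Y$. Hence the residual sum $\sum_{(s,t)\in Y}\eta(s,t;\alpha)\,n_{s,t}$ appearing in the identity of Lemma~\ref{13.2l} vanishes identically.

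Finally, substituting $n_{s,t}=0$ for all $(s,t)\in Y$ into the formula of Lemma~\ref{13.2l} collapses it to
$$SO_\alpha(G)=(4n-4)(2^\alpha+4^\alpha)^{1/\alpha}+2(nk-4n+4)2^{1/\alpha}+\left(6\times 2^{1/\alpha}-2(2^\alpha+4^\alpha)^{1/\alpha}\right)n_{4,4}(G),$$
which is precisely the claimed value. There is no genuine obstacle here, since the statement is an immediate consequence of the two lemmas; the only point requiring a moment's care is the passage from ``no vertex of degree at least $6$'' to ``$G$ is chemical'', which rests on the evenness of all degrees in a polygonal cactus rather than on any further inequality.
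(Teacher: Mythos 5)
Your proposal is correct and follows exactly the route the paper intends: the paper states that the corollary ``follows directly'' from Lemma~\ref{13.2l} and Lemma~\ref{13.4l}, and your argument simply makes explicit the two steps involved (evenness of degrees forcing degrees in $\{2,4\}$ once degree $\geq 6$ is excluded, and the vanishing of the sum over $Y$ in the formula of Lemma~\ref{13.2l}). No discrepancies with the paper's approach.
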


In the following we will determine the minimum value of  $SO_{\alpha}(G)$ among all chemical cacti. By Corollary \ref{13.1c}, this is equivalent to determine the maximum value of $n_{4,4}(G)$ as $6\times2^{1/\alpha}-2(2^\alpha+4^\alpha)^{1/\alpha}<6\times2^{1/\alpha}-2(2\times3^\alpha)^{1/\alpha}=0$ by Lemma \ref{15.1l}. For a chemical cactus $H$, we call a polygon $C$ in $H$ a {\it saturated polygon} if every vertex on $C$ is a cut vertex, i.e., a vertex of degree 4. Further, we call a chemical cactus $H$ {\it nice-saturated} if the following two conditions hold:\\
1). $H$ has as many as possible  saturated polygons;\\
2). the cut vertices on each polygon of $H$ are successively arranged.

For a chemical cactus $H$, let $T(H)$ be the tree whose vertices are the polygons in $H$ and two vertices are adjacent provided their corresponding polygons has a common vertex. It is clear that $T(H)$ is a tree with maximum vertex degree no more than $k$. Let $p$ be the number of the vertices of degree $k$ in $T(H)$, and let $d_1,d_2,\ldots,d_s$ be the degrees of all the vertices in $H$ that are neither of degree 1 nor of degree $k$, i.e., $1<d_i<k$ for each $i\in\{1,2,\ldots,s\}$. Since $T(H)$ is a tree, we have
\begin{equation}\label{e6}
kp+d_1+d_2+\cdots+d_s+(n-p-s)=2n-2
\end{equation}
 and every saturated polygon in $H$ corresponds to a vertex of degree $k$ in $T(H)$. Further, $T(H)$ has as many as possible vertices of degree $k$ if and only if $d_1+d_2+\cdots+d_s-s<k-1$. This implies that
\begin{equation}\label{p}
\frac{n-2}{k-1}-1 < p= \frac{n-2-(d_1+d_2+\cdots+d_s-s)}{k-1}\leq \frac{n-2}{k-1}.
\end{equation}
That is, if $H$ is nice-saturated then $H$ has exactly $\left\lfloor\frac{n-2}{k-1}\right\rfloor$ saturated cycles. As an example,
a chemical $(6,4)$-cactus with $p<\left\lfloor\frac{n-2}{k-1}\right\rfloor=1$, a chemical $(6,4)$-cactus in which the cut vertices on some polygon are not successively arranged, and a nice-saturated $(6,4)$-cactus are illustrated as (a), (b) and (c), respectively, in Figure 1.
\begin{figure}[htbp]\label{Fig3.2}
 \centering
 \includegraphics[height=4.2cm]{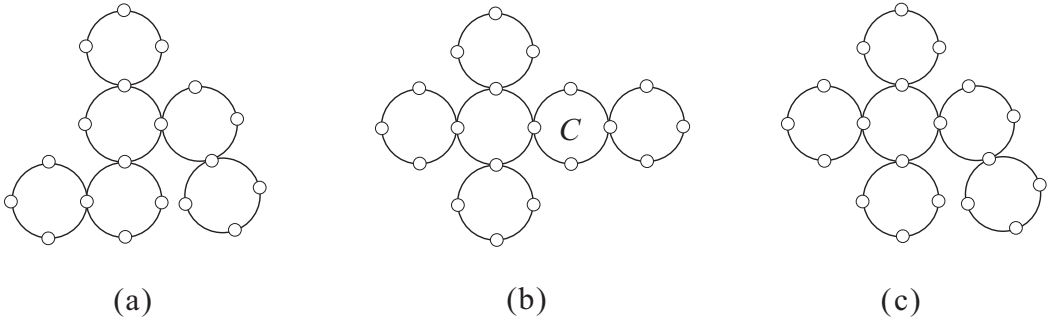}
\caption{(a). $p=0$; (b). The cut vertices on the $k$-cycle $C$ are not successively arranged; (c). A nice-saturated $(6,4)$-cactus. }
\end{figure}
\begin{lemma}\label{13.5l}  A chemical cactus $H$ attains the maximum value of $n_{4,4}(H)$ if and only if $H$ is nice-saturated.
\end{lemma}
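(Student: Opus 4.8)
The plan is to reduce $n_{4,4}(H)$ to a sum of independent per\-polygon contributions and then optimise each contribution separately. Since $H$ is a cactus, every edge lies in exactly one polygon, so $n_{4,4}(H)=\sum_{C}e(C)$, where the sum runs over the $n$ polygons of $H$ and $e(C)$ denotes the number of edges of $C$ whose two endpoints are both cut vertices (equivalently, both of degree $4$). Thus it suffices to understand, for a single $k$-cycle $C$ carrying $c=c(C)$ cut vertices, how large $e(C)$ can be.

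First I would prove the local bound. Arrange the $c$ cut vertices of $C$ into maximal runs of consecutive cut vertices; if there are $m\ge 1$ such runs then exactly $c-m$ edges of $C$ join two cut vertices. Hence, when $C$ is \emph{not} saturated (so $c<k$ and the runs are genuinely separated by degree-$2$ vertices), we get $e(C)\le c-1$ with equality if and only if the cut vertices of $C$ are successively arranged ($m=1$); when $C$ is saturated ($c=k$) every edge qualifies and $e(C)=k$. This is exactly where condition 2) of nice\-saturatedness enters, and the saturated case is the one to watch, since a saturated polygon contributes $k=c$ rather than $c-1$, i.e.\ one extra edge.

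Next I would globalise by counting cut vertices. Each cut vertex of a chemical cactus has degree $4$ and therefore lies in exactly two polygons, so $\sum_{C}c(C)=2\,(\text{number of cut vertices})=2(n-1)$, the last equality because the cut vertices are in bijection with the edges of the tree $T(H)$. Writing $p$ for the number of saturated polygons and substituting the local bounds,
\begin{equation*}
n_{4,4}(H)\le pk+\sum_{C\ \text{unsaturated}}\big(c(C)-1\big)=pk+\big(2(n-1)-pk\big)-(n-p)=n-2+p,
\end{equation*}
with equality if and only if the cut vertices on every polygon are successively arranged. Combined with the bound $p\le\lfloor (n-2)/(k-1)\rfloor$ from (\ref{p}), and since $n-2+p$ is increasing in $p$, the maximum of $n_{4,4}(H)$ over all chemical cacti is $n-2+\lfloor (n-2)/(k-1)\rfloor$.

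Finally, for the equivalence: this value is attained exactly when both $p=\lfloor (n-2)/(k-1)\rfloor$ (the maximal number of saturated polygons, condition 1) and every polygon has its cut vertices successively arranged (condition 2) hold, which are precisely the defining conditions of a nice\-saturated cactus; conversely any nice\-saturated cactus meets both and so is a maximiser. The main obstacle is the bookkeeping for the saturated polygons: one must separate the saturated case from the bound $e(C)\le c(C)-1$, since it is exactly the extra $+1$ per saturated polygon that makes increasing $p$ profitable and thereby couples the two nice\-saturatedness conditions into a single maximisation. A minor additional point is to confirm that a nice\-saturated cactus actually exists, so that the bound is tight over $\mathcal{G}_{n,k}$; this follows from the counting in (\ref{e6})--(\ref{p}) together with an explicit construction as in Figure~1(c).
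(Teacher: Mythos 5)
Your proof is correct and follows essentially the same route as the paper: both arguments bound $n_{4,4}(H)$ polygon by polygon (each saturated polygon contributing $k$, each unsaturated one at most $c(C)-1$, with equality exactly when its cut vertices are consecutively arranged), which together with the count $\sum_{C}c(C)=2(n-1)$ yields $n_{4,4}(H)\le n-2+p$. The only divergence is the final step: the paper rules out non-maximal $p$ by an explicit exchange on the degree sequence of $T(H)$ that produces an $H'$ with $n_{4,4}(H')=n_{4,4}(H)+1$, whereas you invoke the bound $p\le\left\lfloor\frac{n-2}{k-1}\right\rfloor$ from (\ref{p}) and the monotonicity of $n-2+p$ in $p$ --- a slightly more direct finish that still requires (and you correctly flag) the existence of a nice-saturated cactus to show the bound is attained.
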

\noindent
\begin{proof}  Let $p$ and $d_1,d_2,\ldots,d_s$ be defined as above.  By a simple calculation, we have
\begin{equation}\label{degree}
n_{4,4}(H)\leq kp+\sum_{v\in V(G),d(v)<k}(d(v)-1)=kp+(d_1-1)+(d_2-1)+\cdots+(d_s-1)
\end{equation}
and the equality holds if and only if the cut vertices on each polygon of $H$ are successively arranged.

Suppose $p<\left\lfloor\frac{n-2}{k-1}\right\rfloor$. Then by the pervious analysis, we have $d_1+d_2+\cdots+d_s-s\geq k-1$. Let $d'_1,d'_2,\ldots,d'_s$ be a sequence satisfying $d'_1=k, 1\leq d'_i\leq d_i$ for $i\in\{2,3,\cdots,s\}$ and $\sum_{i=1}^sd'_i=\sum_{i=1}^sd_i$. Let {\bf S} be the sequence obtained from the degree sequence of $T(H)$ by replacing $d_1,d_2,\ldots,d_s$ by  $d'_1,d'_2,\ldots,d'_s$, respectively. It is clear that {\bf S} is still a degree sequence of a tree with maximum degree not greater than $k$. Let $H'$ be a cactus such that $T(H')$ has degree sequence   {\bf S} and  the cut vertices on each polygon of $H'$ are successively arranged. Then by (\ref{degree}) and a direct calculation, we have $n_{4,4}(H')=n_{4,4}(H)+1$. That is, $H$ does not attain the maximum value of $n_{4,4}(H)$, which completes our proof.
\end{proof}
\begin{theorem}\label{13.1t}  Let $G\in \mathcal{G}_{n,k}$, where $n\geq3$ and $k\geq3$. Then
$$SO_{\alpha}(G)\geq (4n-4)(2^\alpha+4^\alpha)^{1/\alpha}+2(nk-4n+4)2^{1/\alpha}$$
$$~~~~~~~~~~+\left(6\times2^{1/\alpha}-2(2^\alpha+4^\alpha)^{1/\alpha}\right) \big( n-2+ \left\lfloor \frac{n-2}{k-1} \right\rfloor \big),$$
and the equality holds if and only if $G$ is a nice-saturated chemical cactus.
\end{theorem}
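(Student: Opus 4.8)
The plan is to combine the structural reductions already established with a short counting computation. By Corollary~\ref{13.1c}, any $G\in\mathcal{G}_{n,k}$ attaining the minimum of $SO_\alpha$ must be a chemical cactus (this is where Lemma~\ref{13.4l} does its work, excluding vertices of degree at least $6$), and for any chemical cactus
$$SO_\alpha(G)=(4n-4)(2^\alpha+4^\alpha)^{1/\alpha}+2(nk-4n+4)2^{1/\alpha}+\big(6\times 2^{1/\alpha}-2(2^\alpha+4^\alpha)^{1/\alpha}\big)n_{4,4}(G).$$
As observed right after that corollary, the coefficient $6\times 2^{1/\alpha}-2(2^\alpha+4^\alpha)^{1/\alpha}$ is strictly negative by Lemma~\ref{15.1l}. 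Since $\mathcal{G}_{n,k}$ is finite, a minimizer exists; combined with the displayed formula this reduces the whole theorem to maximizing $n_{4,4}(G)$ over chemical cacti.

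First I would invoke Lemma~\ref{13.5l}, which identifies the maximizers of $n_{4,4}$ among chemical cacti as exactly the nice-saturated ones; it then remains only to evaluate this maximum. Working with the tree $T(H)$ and the parameters $p,d_1,\ldots,d_s$ introduced before~(\ref{e6}), the equality case of~(\ref{degree})---valid for a nice-saturated cactus because the cut vertices on every polygon are successively arranged---gives
$$n_{4,4}(H)=kp+\sum_{i=1}^{s}(d_i-1)=kp+\Big(\sum_{i=1}^{s}d_i\Big)-s.$$
The degree-sum identity~(\ref{e6}) yields $\sum_{i=1}^{s}d_i=2n-2-kp-(n-p-s)=n-2-(k-1)p+s$, so the $s$ cancels and I obtain the clean formula $n_{4,4}(H)=kp+\big(n-2-(k-1)p\big)=n-2+p$.

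Finally, for a nice-saturated cactus the number of saturated polygons is forced by~(\ref{p}) to be $p=\lfloor(n-2)/(k-1)\rfloor$, whence $n_{4,4}(H)=n-2+\lfloor(n-2)/(k-1)\rfloor$. Substituting this maximum value into the formula of Corollary~\ref{13.1c} produces exactly the claimed lower bound, and since the coefficient of $n_{4,4}$ is negative, $SO_\alpha(G)$ equals the bound precisely when $G$ is chemical with maximal $n_{4,4}$, i.e.\ precisely when $G$ is a nice-saturated chemical cactus. Given all the preceding lemmas the argument is essentially bookkeeping; the only delicate point is the reduction itself---one must first guarantee via Lemma~\ref{13.4l} and Corollary~\ref{13.1c} that the minimizer is genuinely chemical before the optimization becomes the purely combinatorial problem of maximizing $n_{4,4}$, and then rely on the cancellation in~(\ref{e6}) to turn $kp+\sum(d_i-1)$ into the transparent quantity $n-2+p$.
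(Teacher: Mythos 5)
Your proposal is correct and follows essentially the same route as the paper: reduce to chemical cacti via Corollary~\ref{13.1c}, note the negative coefficient of $n_{4,4}$, invoke Lemma~\ref{13.5l} to identify the maximizers of $n_{4,4}$ as the nice-saturated cacti, and then use (\ref{e6}), (\ref{degree}) and (\ref{p}) to compute $n_{4,4}=n-2+\lfloor(n-2)/(k-1)\rfloor$. Your write-up actually spells out the cancellation $kp+\sum(d_i-1)=n-2+p$ more explicitly than the paper does, but the argument is the same.
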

\begin{proof}  By (\ref{degree}), (\ref{e6})  and (\ref{p}), if $G$ is minimum, then
$$n_{4,4}(G)=kp+(d_1-1)+(d_2-1)+\cdots+(d_s-1)=n-2+p=n-2+ \left\lfloor \frac{n-2}{k-1} \right\rfloor.$$
Hence, the theorem follows directly from Corollary \ref{13.1c} and Lemma \ref{13.5l}.
\end{proof}

\section{Polygonal cactus with maximum general Sombor index}
In this section we will characterize the polygonal cactus with maximum general Sombor index for the two cases  $\alpha \geq 1, \beta> 1$; and  $\alpha= 2, 1/2\leq \beta<1$, respectively.
%
\begin{lemma}\label{12.2l}
Let  $\Delta ABM$ be a triangle in Euclidean space and $O$ the midpoint of the triangle side $AB$. Then $|MA|^{2\beta}+|MB|^{2\beta}>2|MO|^{2\beta}$ for any real number $\beta\geq \frac{1}{2}$, where $|MA|$ is the length of the side $MA$.
\end{lemma}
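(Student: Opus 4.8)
The plan is to reduce the statement to an explicit one-variable monotonicity claim. Since the three points $A$, $B$, $M$ span a plane, I may work in $\mathbb{R}^2$ and place $O$ at the origin with $A=(-d,0)$ and $B=(d,0)$, where $d=\tfrac12|AB|>0$. Writing $M=(p,q)$, I note that $q\neq 0$ because a genuine (non-degenerate) triangle forces $M$ off the line $AB$. In these coordinates $|MA|^2=(p+d)^2+q^2$, $|MB|^2=(p-d)^2+q^2$ and $|MO|^2=p^2+q^2$, so the claimed inequality becomes
\[
\big((p+d)^2+q^2\big)^\beta+\big((p-d)^2+q^2\big)^\beta>2\big(p^2+q^2\big)^\beta .
\]

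To prove this I would introduce the interpolating function
\[
\Psi(\tau)=\big((p+\tau)^2+q^2\big)^\beta+\big((p-\tau)^2+q^2\big)^\beta,\qquad \tau\in[0,d],
\]
so that $\Psi(0)=2|MO|^{2\beta}$ is the right-hand side and $\Psi(d)=|MA|^{2\beta}+|MB|^{2\beta}$ is the left-hand side. It then suffices to show that $\Psi$ is strictly increasing on $(0,d]$, since this yields $\Psi(d)>\Psi(0)$, which is exactly the assertion.

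Differentiating gives $\Psi'(\tau)=2\beta\big(\phi(p+\tau)-\phi(p-\tau)\big)$, where $\phi(s)=s(s^2+q^2)^{\beta-1}$. Because $p+\tau>p-\tau$ for $\tau>0$ and $\beta\ge\tfrac12>0$, it is enough to check that $\phi$ is strictly increasing on $\mathbb{R}$. A direct computation gives
\[
\phi'(s)=(s^2+q^2)^{\beta-2}\big((2\beta-1)s^2+q^2\big),
\]
and here the hypothesis enters decisively: $2\beta-1\ge 0$ is equivalent to $\beta\ge\tfrac12$, and together with $q^2>0$ this forces $(2\beta-1)s^2+q^2\ge q^2>0$, so $\phi'(s)>0$ for every $s$. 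Hence $\phi$ is strictly increasing, $\Psi'(\tau)>0$ for $\tau>0$, and therefore $\Psi(d)>\Psi(0)$, proving the lemma.

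I expect the only genuine subtlety to lie in the range $\tfrac12\le\beta<1$, where $t\mapsto t^\beta$ is concave and the naive route through the parallelogram identity $|MA|^2+|MB|^2=2|MO|^2+\tfrac12|AB|^2$ combined with convexity of $t\mapsto t^\beta$ breaks down. The monotonicity of $\phi$ is what repairs this uniformly across the whole range $\beta\ge\tfrac12$, and the sign threshold $2\beta-1\ge 0$ is precisely the reason the hypothesis is stated with the constant $\tfrac12$.
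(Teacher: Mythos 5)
Your proof is correct, but it proceeds quite differently from the paper's. The paper works coordinate-free with the three lengths $a=|MA|$, $b=|MB|$, $d=|MO|$: it invokes the classical median inequality $d<\tfrac{a+b}{2}$ (a consequence of the triangle inequality after doubling the median), observes that $(a,b)\rhd\bigl(\tfrac{a+b}{2},\tfrac{a+b}{2}\bigr)$, and applies the majorization lemma (Lemma \ref{15.1l}) to the convex function $x\mapsto x^{2\beta}$ to get $a^{2\beta}+b^{2\beta}\geq 2\bigl(\tfrac{a+b}{2}\bigr)^{2\beta}>2d^{2\beta}$; the hypothesis $\beta\geq\tfrac12$ enters as convexity of $x^{2\beta}$. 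You instead introduce coordinates, interpolate between $2|MO|^{2\beta}$ and $|MA|^{2\beta}+|MB|^{2\beta}$ via $\Psi(\tau)$, and reduce everything to the strict monotonicity of $\phi(s)=s(s^2+q^2)^{\beta-1}$, where $\beta\geq\tfrac12$ enters as the sign condition $2\beta-1\geq 0$ in $\phi'$. Your computations of $\Psi'$ and $\phi'$ check out, $q\neq 0$ is correctly used to keep $\phi'>0$, and the mean value theorem closes the argument. What your route buys is self-containedness (no median inequality, no majorization machinery) and a uniform treatment of the case $a=b$, which the paper splits off separately; what the paper's route buys is brevity and consistency with the majorization toolkit it uses throughout. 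One small remark: your closing comment about the parallelogram identity describes a failure mode the paper does not actually fall into --- the paper applies convexity to the lengths themselves (exponent $2\beta\geq 1$), not to the squared lengths, so its argument is also valid on the whole range $\beta\geq\tfrac12$.
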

\begin{proof} Let $|MA|=a$, $|MB|=b$ and $|MO|=d$.  When $a=b$, the lemma follows directly. Without loss of generality, we now assume $a>b>0$.  By the triangle inequality, $d<\frac{a+b}{2}<a$ and so $(a,b)\rhd \left(\frac{a+b}{2},\frac{a+b}{2}\right)$. Hence,
 by Lemma \ref{15.1l},
 $a^{2\beta}+b^{2\beta}\geq 2\left(\frac{a+b}{2}\right)^{2\beta}>2d^{2\beta}$ when $\beta \geq \frac{1}{2}.$
\end{proof}

\begin{lemma}\label{12.4l}
Let $s>2$ and $t>2$. Then\\
(i). $r_\alpha(s+2,2;\beta)-r_\alpha(s-2,2;\beta)>0$ for any $\alpha >0$ and $\beta>0$;\\
(ii). $r_\alpha(s+2,t;\beta)+r_\alpha(s-2,t;\beta)\geq 2r_\alpha(s,t;\beta)$ for any $\alpha \geq 1$  and $\beta>1$;\\
(iii). $r_\alpha(s+2,t-2;\beta)+r_\alpha(s-2,t+2;\beta)\geq 2r_\alpha(s,t;\beta)$ for any  $\alpha \geq 1$ and $\beta>1$.
\end{lemma}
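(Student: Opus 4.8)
The plan is to read all three parts as convexity statements about the two-variable function $F(x,y)=(x^\alpha+y^\alpha)^\beta=r_\alpha(x,y;\beta)$ restricted to a suitable line segment, reusing the midpoint-convexity mechanism already behind Lemma \ref{15.1l}. Part (i) is pure monotonicity: since $s>2$ forces $s-2>0$ and $\alpha>0$, we get $(s+2)^\alpha>(s-2)^\alpha$, hence $(s+2)^\alpha+2^\alpha>(s-2)^\alpha+2^\alpha>0$, and raising to the positive power $\beta$ preserves the strict inequality. (This is essentially Lemma \ref{13.3l}(i).)

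For part (ii) I would fix $t$ and set $h(x)=(x^\alpha+t^\alpha)^\beta$. Since $s$ is the midpoint of $s-2$ and $s+2$, the claim $h(s+2)+h(s-2)\ge 2h(s)$ is exactly midpoint convexity of $h$ on $[s-2,s+2]$. Convexity is established by the computation
\[
h''(x)=\alpha\beta\,x^{\alpha-2}(x^\alpha+t^\alpha)^{\beta-2}\Big[(\alpha-1)(x^\alpha+t^\alpha)+\alpha(\beta-1)x^\alpha\Big],
\]
whose bracket is a sum of nonnegative terms when $\alpha\ge 1$ and $\beta>1$; thus $h''\ge 0$ on $x>0$ and the inequality follows. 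Alternatively one may invoke Lemma \ref{15.1l} directly with $\zeta=h$, since $(s+2,s-2)\rhd(s,s)$, which also yields the strict version.

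Part (iii) is the main obstacle, because both coordinates move simultaneously. The idea is to restrict $F$ to the line through $(s,t)$ in direction $(1,-1)$: put $\psi(\lambda)=\big((s+\lambda)^\alpha+(t-\lambda)^\alpha\big)^\beta$ for $\lambda\in[-2,2]$, so that the claim becomes $\psi(2)+\psi(-2)\ge 2\psi(0)$, again midpoint convexity. Writing $u(\lambda)=(s+\lambda)^\alpha+(t-\lambda)^\alpha$, a direct computation gives $u''(\lambda)=\alpha(\alpha-1)\big[(s+\lambda)^{\alpha-2}+(t-\lambda)^{\alpha-2}\big]\ge 0$ for $\alpha\ge 1$, so $u$ is convex along the segment; since $w\mapsto w^\beta$ is convex and increasing on $w>0$ for $\beta>1$, the composition $\psi=u^\beta$ is convex, as confirmed explicitly by
\[
\psi''(\lambda)=\beta(\beta-1)u^{\beta-2}(u')^2+\beta u^{\beta-1}u''\ge 0 .
\]

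The one point that genuinely requires the hypotheses $s>2$ and $t>2$ is that the coordinates must stay strictly positive throughout $[-2,2]$, so that the derivative formulas are valid on the whole segment: at the endpoints the coordinates are $(s+2,t-2)$ and $(s-2,t+2)$, and $s>2$, $t>2$ guarantee $s-2>0$ and $t-2>0$, whence $s+\lambda$ and $t-\lambda$ are positive for every $\lambda\in[-2,2]$. Verifying this domain condition is the only subtle step; once it is in place, both (ii) and (iii) follow from the composition rule \emph{convex increasing outer function $\circ$ convex inner function is convex}, combined with midpoint convexity.
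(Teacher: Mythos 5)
Your proof is correct, and it rests on the same underlying idea as the paper's: all three parts are midpoint-convexity statements. The execution differs slightly in a way worth noting. For (ii) and (iii) the paper argues in two stages via Lemma \ref{15.1l}: first the convexity of $w\mapsto w^\beta$ gives $A^\beta+B^\beta\geq 2\bigl(\tfrac{A+B}{2}\bigr)^\beta$ for the two inner arguments $A,B$, and then the convexity of $x\mapsto x^\alpha$ (applied separately to the $s$-terms, and in (iii) also to the $t$-terms) together with the monotonicity of $w^\beta$ pushes the averaged inner argument down to $s^\alpha+t^\alpha$. You instead restrict $F$ to the relevant line segment and verify convexity of the one-variable restriction directly, by an explicit second-derivative computation for (ii) and by the rule ``convex increasing $\circ$ convex is convex'' for the diagonal direction in (iii); your formulas for $h''$, $u''$ and $\psi''$ check out, and your observation that $s>2$, $t>2$ keeps both coordinates positive on the whole segment is exactly the domain condition needed. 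The paper's route avoids calculus entirely and reuses its majorization lemma, which keeps the argument uniform with the rest of Section 2; your route is self-contained and makes the role of the hypotheses $\alpha\geq 1$, $\beta>1$ visible term by term in the bracketed expressions. Either way the conclusion (and indeed the strict inequality, which the statement does not require) follows.
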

\begin{proof} (i) follows directly.

For (ii), by Lemma \ref{15.1l} and the monotonicity of $r_\alpha(s,t;\beta)$, we have
\begin{eqnarray*}
r_\alpha(s+2,t;\beta)+r_\alpha(s-2,t;\beta) &=& \left((s+2)^{\alpha}+t^{\alpha} \right)^{\beta}+
\left((s-2)^{\alpha}+t^{\alpha} \right)^{\beta}\\
&&\geq 2\left( \frac{(s+2)^{\alpha}+(s-2)^{\alpha}}{2}+t^{\alpha} \right)^{\beta}\\
&&\geq 2(s^{\alpha}+t^{\alpha})^{\beta}\\
&&= 2r_\alpha(s,t;\beta).
\end{eqnarray*} Hence, (ii) holds.

The discussion for (iii) is analogous to that for (ii).
\end{proof}



\begin{theorem}\label{12.1t} Let  $n\geq 3,k\geq 3$ and $G\in \mathcal{G}_{n,k}$. If $\alpha \geq 1$ and $\beta> 1$ ; or $\alpha= 2$ and $\frac{1}{2}\leq \beta<1$, then
$$SO_{\alpha}(G;\beta)\leq 2n((2n)^{\alpha}+2^{\alpha})^{\beta}+n(k-2)(2^{\alpha+1})^{\beta}$$
and the equality holds if and only if $G\cong S_{n,k}$.
\end{theorem}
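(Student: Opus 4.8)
The plan is a standard shifting (local-transformation) argument. Since $\mathcal{G}_{n,k}$ is finite for fixed $n,k$, a maximiser $G$ exists, and it suffices to show that $G\not\cong S_{n,k}$ forces a strict increase of $SO_\alpha(\cdot;\beta)$, which is impossible. I would first record that the claimed right-hand side is exactly $SO_\alpha(S_{n,k};\beta)$: in $S_{n,k}$ the centre has degree $2n$ and is incident to $2n$ edges of type $(2n,2)$, while the remaining $n(k-2)$ edges join two degree-$2$ vertices, so $SO_\alpha(S_{n,k};\beta)=2n\,r_\alpha(2n,2;\beta)+n(k-2)\,r_\alpha(2,2;\beta)=2n((2n)^\alpha+2^\alpha)^\beta+n(k-2)(2^{\alpha+1})^\beta$. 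Because $S_{n,k}$ is the unique member of $\mathcal{G}_{n,k}$ with a single cut vertex, $G\not\cong S_{n,k}$ means $G$ has at least two cut vertices.

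The transformation I would use relocates a pendant polygon onto a vertex of maximum degree. Let $w$ be a vertex with $d_G(w)=2b$ maximum, and let $v_1$ be a cut vertex at maximum distance from $w$ in the block--cut tree of $G$, with $d_G(v_1)=2a$; let $C=v_1z_2\cdots z_kv_1$ be a pendant polygon at $v_1$. The point of this choice is that, $v_1$ being a deepest cut vertex, all of its neighbours other than $z_2,z_k$ have degree $2$ (at most one exceptional neighbour of larger degree can occur, which I would absorb into the estimate). Put $G'=G-v_1z_2-v_1z_k+wz_2+wz_k\in\mathcal{G}_{n,k}$, so that $d(v_1)$ drops from $2a$ to $2a-2$, $d(w)$ rises from $2b$ to $2b+2$, and all other degrees are unchanged. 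Writing $D(s,t)=r_\alpha(s+2,t;\beta)-r_\alpha(s,t;\beta)$, the difference decomposes as
\begin{equation*}
SO_\alpha(G';\beta)-SO_\alpha(G;\beta)=\sum_{y\in N_G(w)}D(2b,d(y))+2\,r_\alpha(2b+2,2;\beta)-(2a-2)\,D(2a-2,2)-2\,r_\alpha(2a,2;\beta),
\end{equation*}
the three groups being the gain on the edges at $w$, the two newly created edges $wz_2,wz_k$, and the loss on the edges at $v_1$ (which all have degree-$2$ other endpoints by the choice of $v_1$).

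For $\alpha\ge 1$ and $\beta>1$ I would finish by monotonicity and convexity. Lemma~\ref{13.3l}(i) gives $D(s,t)>0$ and $r_\alpha(2b+2,2;\beta)>r_\alpha(2a,2;\beta)$ (as $2b+2>2a$). Lemma~\ref{12.4l}(ii) says $D(s,t)$ is non-decreasing in $s$, and differentiating $D$ in $t$ shows that, since $\beta>1$, $D(s,t)$ is increasing in $t$ as well; hence every gaining term satisfies $D(2b,d(y))\ge D(2b,2)\ge D(2a-2,2)$, while there are $2b\ge 2a>2a-2$ of them. Thus the first sum already dominates $(2a-2)D(2a-2,2)$, the transferred edges contribute a further positive amount, and the total is strictly positive, contradicting maximality. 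The same chain of inequalities is strict unless $G\cong S_{n,k}$, which yields the equality characterisation.

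The main obstacle is the regime $\alpha=2$, $\tfrac12\le\beta<1$, where the analytic convexity of Lemma~\ref{12.4l}(ii)--(iii) is unavailable (it was proved only for $\beta>1$) and, worse, $D(s,t)$ is now \emph{decreasing} in $t$, so a maximum-degree neighbour of $w$ can spoil the crude monotone comparison above. Here I would replace the convexity input by the geometric Lemma~\ref{12.2l}: reading $r_2(s,t;\beta)$ as the $2\beta$-th power of the distance from the degree-point $(s,t)$ to the origin, the midpoint inequality of Lemma~\ref{12.2l} gives, for all $\beta\ge\tfrac12$, the strict spread estimates $r_2(s+2,t;\beta)+r_2(s-2,t;\beta)>2r_2(s,t;\beta)$ and $r_2(s+2,t-2;\beta)+r_2(s-2,t+2;\beta)>2r_2(s,t;\beta)$, so $D(s,t)$ is still increasing in $s$. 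I would then pair each losing edge at $v_1$ (all of type $(2a-2,2)$) with a gaining edge at $w$ of the \emph{same} second coordinate, so that $t$-monotonicity is irrelevant and convexity in $s$ closes each pair; the only configuration not covered this way is when $w$ has a neighbour $y$ of maximum degree, and this sub-case I would dispose of by instead relocating a pendant polygon from $y$ to $w$, where the edge $wy$ of type $(2b,2b)$ becomes $(2b+2,2b-2)$ and the diagonal-spread form of Lemma~\ref{12.2l} makes the change positive. Assembling these pieces gives $SO_\alpha(G';\beta)>SO_\alpha(G;\beta)$ in every case, the contradiction that forces $G\cong S_{n,k}$, with strictness again giving uniqueness.
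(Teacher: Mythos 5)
Your overall strategy (a local relocation of a pendant polygon applied to a maximiser) is the right family of argument, but the specific one\--sided move you chose creates gaps that the paper's proof is designed to avoid. First, your loss accounting at $v_1$ is wrong: choosing $v_1$ as a deepest cut vertex only forces the polygons hanging \emph{below} $v_1$ to be pendant; the two neighbours of $v_1$ inside its \emph{parent} polygon (the one on the path back to $w$) can both be cut vertices of arbitrarily large even degree, so up to two of the $2a-2$ residual edges at $v_1$ have type $(2a-2,d)$ with $d$ large, not $(2a-2,2)$. Since for $\beta>1$ the increment $D(s,t)=r_\alpha(s+2,t;\beta)-r_\alpha(s,t;\beta)$ is \emph{increasing} in $t$, these are precisely the largest loss terms, and ``absorbing them into the estimate'' is the whole difficulty, not a footnote; your per-term comparison $D(2b,d(y))\ge D(2a-2,2)$ also becomes delicate when $2b=2a$. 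Second, your displayed decomposition is invalid when $w$ and $v_1$ lie in a common polygon (in particular when they are adjacent), because the edge $wv_1$ then changes at both endpoints, from $(2b,2a)$ to $(2b+2,2a-2)$; the paper treats this as a separate case using the diagonal inequality of Lemma~\ref{12.4l}(iii). Third, in the regime $\alpha=2$, $\tfrac12\le\beta<1$ your pairing of each losing edge with a gaining edge at $w$ ``of the same second coordinate'' need not be possible: every neighbour of $w$ could be a cut vertex of degree $4$, so there may be no gaining edge of type $(\cdot,2)$ at all, and with $D$ decreasing in $t$ the mismatch goes the wrong way. Your fallback of relocating a pendant polygon from a maximum-degree neighbour $y$ of $w$ also presumes $y$ carries a pendant polygon, which it need not.

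The idea you are missing is the paper's symmetric, two-sided move. It takes two pendant polygons with cut vertices $z_1$ and $v_1$ and forms \emph{both} graphs $G_1$ (move the polygon at $v_1$ onto $z_1$) and $G_2$ (move the polygon at $z_1$ onto $v_1$), then shows that the \emph{sum} of the two differences $\bigl(SO_\alpha(G_1;\beta)-SO_\alpha(G;\beta)\bigr)+\bigl(SO_\alpha(G_2;\beta)-SO_\alpha(G;\beta)\bigr)$ is positive, whence at least one single move strictly increases the index. The decisive advantage is that in this sum every edge at $z_1$ or $v_1$ is compared with \emph{itself}: each pair contributes $r_\alpha(d+2,t;\beta)+r_\alpha(d-2,t;\beta)-2r_\alpha(d,t;\beta)\ge 0$ with the \emph{same} second coordinate $t$, so only convexity in the first argument (Lemma~\ref{12.4l}(ii)--(iii) for $\alpha\ge1$, $\beta>1$, or the midpoint Lemma~\ref{12.2l} for $\alpha=2$, $\beta\ge\tfrac12$) is needed; no monotonicity in $t$, no degree information about the neighbours, and no pairing across different vertices. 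That is exactly the machinery that closes the cases where your argument stalls, and without it (or a worked-out substitute for the three gaps above) your proof is incomplete.
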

\begin{proof} We first assume that  $\alpha \geq 1$ and $\beta> 1$.

Let $G$ be such that $SO_{\alpha}(G;\beta)$ is as large as possible. Further, let $C_1=z_1z_2\cdots z_kz_1$ and  $C_2=v_1v_2\cdots v_kv_1$ be two  pendent polygons such that $d_{G}(C_1,C_2)$ is as large as possible, where $z_1$ and $v_1$  are the cut-vertices of $C_1$ and $C_2$, respectively.

If $G\cong S_{n,k}$, then the theorem follows directly. We now assume $G\not\cong S_{n,k}$. Then, $z_1\neq v_1$. Let $G_1=G-v_1v_2-v_1v_k+z_1v_2+z_1v_k$ and  $G_2=G-z_1z_2-z_1z_k+v_1z_2+v_1z_k$. We consider the following two cases:

\noindent
{\bf Case 1.} $z_1$ and $v_1$ are adjacent in $G$.

In this case, we have
$SO_\alpha(G_1;\beta)-SO_\alpha(G;\beta)=$
\begin{eqnarray*}
&&\sum_{v\in N_G(v_1)\setminus \{z_1\}}\left(r_\alpha\big(d_G(v_1)-2,d_G(v);\beta\big)-r_\alpha\big(d_G(v_1),d_G(v);\beta\big)\right)\\[2mm]
&&+\sum_{z\in N_G(z_1)\setminus \{v_1\}}\left(r_\alpha\big(d_G(z_1)+2,d_G(z);\beta\big)-r_\alpha\big(d_G(z_1),d_G(z);\beta\big)\right)\\[2mm]
&&+2r_\alpha\big(d_G(z_1)+2,2;\beta\big)-2r_\alpha\big(d_G(v_1)-2,2;\beta\big)\\
&&+r_\alpha\big(d_G(v_1)-2,d_G(z_1)+2;\beta\big)-r_\alpha\big(d_G(v_1),d_G(z_1);\beta\big),\,\,\text{and}
\end{eqnarray*}

$SO_\alpha(G_2;\beta)-SO_\alpha(G;\beta)=$
\begin{eqnarray*}
&&\sum_{v\in N_G(v_1)\setminus \{z_1\}}\left(r_\alpha\big(d_G(v_1)+2,d_G(v);\beta\big)-r_\alpha\big(d_G(v_1),d_G(v);\beta\big)\right)\\[2mm]
&&+\sum_{z\in N_G(z_1)\setminus \{v_1\}}\left(r_\alpha\big(d_G(z_1)-2,d_G(w);\beta\big)-r_\alpha\big(d_G(z_1),d_G(z);\beta\big)\right)\\[2mm]
&&+2r_\alpha\big(d_G(v_1)+2,2;\beta\big)-2r_\alpha\big(d_G(z_1)-2,2;\beta\big)\\
&&+r_\alpha\big(d_G(v_1)+2,d_G(z_1)-2;\beta\big)-r_\alpha\big(d_G(v_1),d_G(z_1);\beta\big).
\end{eqnarray*}

Recall that $z_1$ and $v_1$  are the cut-vertices of  $C_1$ and $C_2$, respectively.
Therefore, $d_G(z_1)\geq 4$ and $d_G(v_1)\geq 4$.  Combining with Lemma  \ref{12.4l}, we have
\begin{eqnarray*}
&&r_\alpha\big(d_G(v_1)+2,d_G(v);\beta\big)+r_\alpha\big(d_G(v_1)-2,d_G(v);\beta\big)>2r_\alpha\big(d_G(v_1),d_G(v);\beta\big),\\
&&r_\alpha\big(d_G(z_1)+2,d_G(z);\beta\big)+r_\alpha\big(d_G(z_1)-2,d_G(z);\beta\big)>2r_\alpha\big(d_G(z_1),d_G(z);\beta\big),\\
&&r_\alpha\big(d_G(v_1)+2,d_G(z_1)-2;\beta\big)+r_\alpha\big(d_G(v_1)-2,d_G(z_1)+2;\beta\big)> 2r_\alpha\big(d_G(v_1),d_G(z_1);\beta\big),\\
&&r_\alpha\big(d_G(z_1)+2,2;\beta\big)-r_\alpha\big(d_G(z_1)-2,2;\beta\big)>0, \text{\quad and}\\
&& r_\alpha\big(d_G(v_1)+2,2;\beta\big)-r_\alpha\big(d_G(v_1)-2,2;\beta\big)>0.
\end{eqnarray*}
This means that $SO_\alpha(G_1;\beta)> SO_\alpha(G;\beta)$ or $SO_\alpha(G_2;\beta)> SO_\alpha(G;\beta)$, a contradiction.

\noindent
{\bf Case 2.} $z_1$ and $v_1$ are not adjacent in $G$.

In this case,  we have
\begin{eqnarray*}
SO_\alpha(G_1;\beta)-SO_\alpha(G;\beta) &=&\sum_{v\in N_G(v_1)}\left(r_\alpha\big(d_G(v_1)-2,d_G(v);\beta\big)-r_\alpha\big(d_G(v_1),d_G(v);\beta\big)\right)\\[2mm]
&&+\sum_{z\in N_G(z_1)}\left(r_\alpha\big(d_G(z_1)+2,d_G(z);\beta\big)-r_\alpha\big(d_G(z_1),d_G(z);\beta\big)\right)\\[2mm]
&&+2r_\alpha\big(d_G(z_1)+2,2;\beta\big)-2r_\alpha\big(d_G(v_1)-2,2;\beta\big),\,\,\text{and}
\end{eqnarray*}
\begin{eqnarray*}
SO_\alpha(G_2;\beta)-SO_\alpha(G;\beta)&=&\sum_{v\in N_G(v_1)}\left(r_\alpha\big(d_G(v_1)+2,d_G(v);\beta\big)-r_\alpha\big(d_G(v_1),d_G(v);\beta\big)\right)\\[2mm]
&&+\sum_{z\in N_G(z_1)}\left(r_\alpha\big(d_G(z_1)-2,d_G(z);\beta\big)-r_\alpha\big(d_G(z_1),d_G(z);\beta\big)\right)\\[2mm]
&&+2r_\alpha\big(d_G(v_1)+2,2;\beta\big)-2r_\alpha\big(d_G(z_1)-2,2;\beta\big).
\end{eqnarray*}

Recall that $d_G(z_1)\geq 4$ and $d_G(v_1)\geq 4$. Similar to Case 1, by Lemma  \ref{12.4l} , we have
\begin{eqnarray*}
&&r_\alpha\big(d_G(v_1)+2,d_G(v);\beta\big)+r_\alpha\big(d_G(v_1)-2,d_G(v);\beta\big)>2r_\alpha\big(d_G(v_1),d_G(v);\beta\big),\\
&&r_\alpha\big(d_G(z_1)+2,d_G(z);\beta\big)+r_\alpha\big(d_G(z_1)-2,d_G(z);\beta\big)>2r_\alpha\big(d_G(z_1),d_G(z);\beta\big),\\
&&r_\alpha\big(d_G(z_1)+2,2;\beta\big)-r_\alpha\big(d_G(z_1)-2,2;\beta\big)>0  \text{\quad and}\\
&& r_\alpha\big(d_G(v_1)+2,2;\beta\big)-r_\alpha\big(d_G(v_1)-2,2;\beta\big)>0,
\end{eqnarray*}
which means that $SO_\alpha(G_1;\beta)> SO_\alpha(G;\beta)$ or $SO_\alpha(G_2;\beta)> SO_\alpha(G;\beta)$, a contradiction.

 Therefore, $S_{n,k}$ is the unique maximal polygonal cactus. Further, we have
 $$SO_\alpha(S_{n,k};\beta)=2n r_{\alpha}(2n,2;\beta)+n(k-2)r_{\alpha}(2,2;\beta)=2n((2n)^{\alpha}+2^{\alpha})^{\beta}+n(k-2)(2^{\alpha+1})^{\beta}.$$

 The discussion for the case that $\alpha= 2$ and $\frac{1}{2}\leq \beta<1$ is analogous by Lemma \ref{12.4l} (i) and  Lemma \ref{12.2l}.\end{proof}

 \section{Polygonal cacti with minimum general Sombor index}
A symmetric function $\varphi(s,t)$ defined on positive real numbers is called {\em escalating}  \cite{Wang2014} if
\begin{align}\label{e21e}
\varphi(s_{1},s_{2})+\varphi(t_1,t_2)\geq  \varphi(s_2,t_1)+\varphi(s_1,t_2)
\end{align}
for any $s_1\geq t_1>0$  and $s_2\geq t_2>0,$ and the inequality holds if $s_1>t_1>0$ and $s_2>t_2>0$.
Further, an escalating function $\varphi(s,t)$ is called {\em special  escalating} \cite{YJC2} if
\begin{align}\label{e22e}
4\varphi(2l,2)-\varphi(2l-2,4)-\varphi(2l-2,2)-\varphi(4,2)-\varphi(4,4)\geq 0
\end{align}
for $l\geq 3$ and
\begin{align}\label{e23e}
\varphi(s_1,s_2)-\varphi(t_1,t_2)\geq 0
\end{align}
for any $s_1\geq t_1\geq 2$ and $s_2\geq t_2\geq 2$.
\begin{lemma}\label{15.3l} If $s, t>0$, then  $r_\alpha(s,t;\beta)=\left(s^\alpha+t^\alpha\right)^\beta$ is special  escalating for $\alpha\geq 1$ and $\beta>1$.
\end{lemma}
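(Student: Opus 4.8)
The plan is to verify in turn the three defining conditions of a special escalating function: the escalating inequality (\ref{e21e}), the monotonicity condition (\ref{e23e}), and the structural inequality (\ref{e22e}). Throughout I write $\varphi(s,t)=r_\alpha(s,t;\beta)=(s^\alpha+t^\alpha)^\beta$. Condition (\ref{e23e}) is immediate and should be dispatched first: for $s_1\geq t_1\geq 2$ and $s_2\geq t_2\geq 2$, Lemma \ref{13.3l}(i) gives $r_\alpha(s_1,s_2;\beta)\geq r_\alpha(t_1,s_2;\beta)\geq r_\alpha(t_1,t_2;\beta)$ since $\alpha,\beta>0$. So the genuine work is the escalating inequality and (\ref{e22e}).

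For the escalating inequality, fix $s_1\geq t_1>0$ and $s_2\geq t_2>0$ and set $a=s_1^\alpha\geq b=t_1^\alpha$ and $c=s_2^\alpha\geq d=t_2^\alpha$, the order being preserved because $\alpha>0$. Since $\varphi$ is symmetric, the claim $\varphi(s_1,s_2)+\varphi(t_1,t_2)\geq\varphi(s_2,t_1)+\varphi(s_1,t_2)$ is exactly $(a+c)^\beta+(b+d)^\beta\geq(b+c)^\beta+(a+d)^\beta$, i.e. the mixed second difference of $\psi(x)=x^\beta$ is nonnegative. I would write $(a+c)^\beta-(b+c)^\beta-(a+d)^\beta+(b+d)^\beta=\int_b^a\big(\psi'(x+c)-\psi'(x+d)\big)\,dx$ and use that $\psi'(x)=\beta x^{\beta-1}$ is increasing (strictly, as $\beta>1$): since $c\geq d$ the integrand is $\geq 0$, giving the inequality, with strictness once $a>b$ and $c>d$, i.e. once $s_1>t_1$ and $s_2>t_2$, as the definition demands.

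The main part is (\ref{e22e}), namely $4r_\alpha(2l,2;\beta)\geq r_\alpha(2l-2,4;\beta)+r_\alpha(2l-2,2;\beta)+r_\alpha(4,2;\beta)+r_\alpha(4,4;\beta)$ for $l\geq 3$. Instead of estimating the whole expression at once, I would bound each of the four terms on the right separately by the single quantity $r_\alpha(2l,2;\beta)$ and then add. The bounds $r_\alpha(2l-2,2;\beta)\leq r_\alpha(2l,2;\beta)$ and $r_\alpha(4,2;\beta)\leq r_\alpha(2l,2;\beta)$ (here $2l\geq 6>4$) are just Lemma \ref{13.3l}(i). The bound $r_\alpha(2l-2,4;\beta)\leq r_\alpha(2l,2;\beta)$ follows from $(2l,2)\rhd(2l-2,4)$ and Lemma \ref{15.1l} applied to $\zeta(x)=x^\alpha$ (strictly convex for $\alpha>1$, linear hence an equality for $\alpha=1$), which yields $(2l-2)^\alpha+4^\alpha\leq(2l)^\alpha+2^\alpha$; raising to the power $\beta>0$ preserves the inequality. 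Summing the four bounds gives (\ref{e22e}).

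The one spot that needs care is the last term $r_\alpha(4,4;\beta)\leq r_\alpha(2l,2;\beta)$, since $(4,4)$ and $(2l,2)$ do not share a coordinate sum once $l>3$, so majorization does not apply directly. I would split cases: for $l=3$ the pairs $(6,2)$ and $(4,4)$ have equal sum and $(6,2)\rhd(4,4)$, so Lemma \ref{15.1l} gives $2\cdot 4^\alpha\leq 6^\alpha+2^\alpha$; for $l>3$ I chain with monotonicity, $2\cdot 4^\alpha\leq 6^\alpha+2^\alpha\leq(2l)^\alpha+2^\alpha$, then pass to the $\beta$-th power. This completes the four bounds, hence the whole verification. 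The only subtlety worth flagging is tracking where equality occurs — the boundary $\alpha=1$ makes the first and fourth bounds tight — but since (\ref{e22e}) asserts only ``$\geq 0$'', this does not affect the conclusion that $r_\alpha(s,t;\beta)$ is special escalating for $\alpha\geq 1$ and $\beta>1$.
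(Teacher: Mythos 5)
Your proposal is correct and follows essentially the same route as the paper: condition (\ref{e23e}) by monotonicity, and condition (\ref{e22e}) by bounding each of the four right-hand terms by $r_\alpha(2l,2;\beta)$ using exactly the comparisons $(2l)^{\alpha}+2^{\alpha}\geq(2l-2)^{\alpha}+4^{\alpha}>(2l-2)^{\alpha}+2^{\alpha}$, $(2l)^{\alpha}+2^{\alpha}>4^{\alpha}+2^{\alpha}$ and $(2l)^{\alpha}+2^{\alpha}\geq 6^{\alpha}+2^{\alpha}\geq 2\cdot 4^{\alpha}$ that the paper uses. The only (cosmetic) difference is in the escalating inequality (\ref{e21e}), where you prove the nonnegativity of the mixed second difference of $x^{\beta}$ by an integral of its increasing derivative, whereas the paper invokes the majorization $\big(s_1^{\alpha}+s_2^{\alpha},\,t_1^{\alpha}+t_2^{\alpha}\big)\rhd\big(s_2^{\alpha}+t_1^{\alpha},\,s_1^{\alpha}+t_2^{\alpha}\big)$ together with Lemma \ref{15.1l}; these are two formulations of the same convexity fact and both correctly yield the strictness exactly when $s_1>t_1$ and $s_2>t_2$.
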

\begin{proof} Set $\varphi(s,t)=\left(s^\alpha+t^\alpha\right)^\beta$. Since $\alpha\geq 1$ and $\beta>1$, we have $\big(s_{1}^{\alpha}+s_{2}^{\alpha},t_{1}^{\alpha}+t_{2}^{\alpha}\big)\rhd \big(s_{2}^{\alpha}+t_{1}^{\alpha},s_1^{\alpha}+t_{2}^{\alpha}\big)$ when $s_1>t_1>0$ and $s_2>t_2>0$. Then by Lemma \ref{15.1l}, the inequality in (\ref{e21e})  strictly holds. Further, it is clear that the equality in (\ref{e21e}) holds when $s_1=t_1>0$ or $s_2=t_2>0$. This means that $\left(s^\alpha+t^\alpha\right)^\beta$ is escalating.

In addition,  by Lemma \ref{15.1l} and the monotonicity of $\left(s^\alpha+t^\alpha\right)^\beta$, if $l\geq 3$ and $\alpha \geq 1$ then $(2l)^{\alpha} +2^{\alpha}\geq (2l-2)^{\alpha} +4^{\alpha}>(2l-2)^{\alpha} +2^{\alpha}$, $(2l)^{\alpha} +2^{\alpha}> 4^{\alpha}+2^{\alpha}$ and $(2l)^{\alpha} +2^{\alpha}\geq 6^{\alpha}+2^{\alpha}\geq 4^{\alpha}+4^{\alpha}$. Hence, (\ref{e22e}) follows directly as $\beta> 1$.

Finally, it is easy to see that (\ref{e23e}) holds when $\alpha\geq 1$ and $\beta>1$  by the monotonicity of $\left(s^\alpha+t^\alpha\right)^\beta$. Therefore, $\left(s^\alpha+t^\alpha\right)^\beta$ is special  escalating.
\end{proof}

 A $k$-polygonal cactus $G$ is called a {\em  cactus chain} if each polygon in $G$ has at most two cut-vertices and each cut-vertex is the common vertex of exactly two polygons. It is clear that each  cactus chain has exactly $n-2$ non-pendent polygons and two pendent polygons for $n\geq 2$. We denote by $\mathcal{A}_{n,k}$ the class consisting of those cactus chains such that each pair of  cut-vertices that  lies in the same polygon of $G$ are adjacent. In contrast, we denote by $\mathcal{B}_{n,k}$ the class consisting of those cactus chains such that each pair of  cut-vertices that  lies in the same polygon of $G$ are not adjacent. It can be seen that $\mathcal{A}_{n,k}$ is unique for $k\geq 3$ and  $\mathcal{B}_{n,3}=\emptyset$.

\begin{theorem}\label{15.1t} \cite{YJC2} Let $f(s,t)$ be a special escalating function and $G$ be a cactus of $\mathcal{G}_{n,k}$, where $n\geq3$ and $k\geq3$.\\
(i). If $k=3$, then
   $$I_f(G)\geq 2f(2,2)+2n f(4,2)+(n-2)f(4,4)$$
with equality holding if and only if $G\in \mathcal{A}_{n,3}$.\\
(ii). If $k\geq 4$, then
   $$I_f(G)\geq (kn-4n+4)f(2,2)+(4n-4)f(4,2),$$
where the equality holds if $G\in\mathcal{B}_{n,k}$. Furthermore, if $k\in \{4,5\}$, then the equality holds if and only if $G\in\mathcal{B}_{n,k}.$
\end{theorem}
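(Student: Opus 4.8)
The plan is to convert $I_f(G)$ into a function of the edge-type multiplicities $n_{s,t}$ and then run a two-stage argument: reduce to chemical cacti, then minimize over chemical cacti by minimizing $n_{4,4}$. Following the bookkeeping of Lemma \ref{13.2l}, the two handshake identities $nk-n+1=\sum_{(s,t)\in X}\left(\frac1s+\frac1t\right)n_{s,t}$ and $nk=\sum_{(s,t)\in X}n_{s,t}$ solve for $n_{2,4}$ and $n_{2,2}$ in terms of $n_{4,4}$ and the higher-degree counts, so that
$$I_f(G)=(4n-4)f(4,2)+(nk-4n+4)f(2,2)+\bigl(f(4,4)+f(2,2)-2f(4,2)\bigr)n_{4,4}+\sum_{(s,t)\in Y}c_{s,t}\,n_{s,t},$$
where each $c_{s,t}$ collects the contribution of a high-degree edge. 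Applying the escalating inequality (\ref{e21e}) with $s_1=s_2=4$ and $t_1=t_2=2$ gives $f(4,4)+f(2,2)>2f(4,2)$, so the coefficient of $n_{4,4}$ is strictly positive; hence among chemical cacti (where the $Y$-sum vanishes) minimizing $I_f$ is exactly minimizing $n_{4,4}$.

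The first stage shows a minimizer has maximum degree at most $4$. I would argue by contradiction in the style of Lemma \ref{13.4l}: taking a vertex $v$ with $d(v)=2l\geq 6$ together with a pendent polygon at maximal distance, perform a switch that detaches a polygon incident to $v$ and reattaches it at a degree-$2$ vertex, thereby lowering $d(v)$ to $2l-2$ and raising one vertex to degree $4$. Writing the resulting change in $I_f$ as a sum of local edge differences, the edges destroyed are of type $(2l,2)$ and the edges created are exactly the types on the right of (\ref{e22e}); since $f$ is special escalating, the inequality $4f(2l,2)\geq f(2l-2,4)+f(2l-2,2)+f(4,2)+f(4,4)$ certifies that the switch does not increase $I_f$, while the strict escalating part and the monotonicity (\ref{e23e}) make the drop genuine. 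Hence a minimizer is chemical and the $Y$-sum above disappears.

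The second stage is the purely combinatorial minimization of $n_{4,4}$ over chemical cacti. Since a $(4,4)$-edge is precisely an edge whose two endpoints are both cut vertices, $n_{4,4}$ counts the adjacent pairs of cut vertices lying on a common polygon. For $k=3$ every polygon is a triangle, so any two of its cut vertices are adjacent; passing to the tree $T(H)$ and using that a triangle with $c$ cut vertices contributes $\binom{c}{2}$ such edges, a convexity/counting argument forces $n_{4,4}\geq n-2$ with equality exactly when $T(H)$ is a path, i.e. $G\in\mathcal{A}_{n,3}$, reproducing bound (i). For $k\geq 4$ one can place the cut vertices of each polygon non-adjacently, giving $n_{4,4}=0$, realized by $\mathcal{B}_{n,k}$, which yields (ii).

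The part I expect to be most delicate is the equality characterization, in particular why the ``only if'' in (ii) is restricted to $k\in\{4,5\}$. The key is that $n_{4,4}=0$ is equivalent to requiring the set of cut vertices on every polygon to be independent in the cycle $C_k$. For $k\in\{4,5\}$ the independence number of $C_k$ is $2$, so a polygon with no adjacent cut vertices has at most two of them, which forces the cactus-chain structure and pins the graph down to $\mathcal{B}_{n,k}$. For $k\geq 6$, however, $C_k$ has independent sets of size $\geq 3$, so a polygon may carry three or more pairwise non-adjacent cut vertices; such a graph still has $n_{4,4}=0$ and attains the bound yet is not a cactus chain, which is exactly why equality can be guaranteed but not characterized for general $k$. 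Making the first stage fully rigorous—designing the switch so that (\ref{e22e}) applies verbatim and verifying strictness in every adjacency configuration of $v$ and the chosen pendent polygon—is the main technical obstacle; the remainder is the counting above together with this independence-number dichotomy.
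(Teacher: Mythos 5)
First, note that the paper does not actually prove Theorem \ref{15.1t}: it is quoted from \cite{YJC2} without argument, so there is no in-paper proof to measure yours against. Judged on its own terms, your skeleton is the natural one and mirrors what this paper does in Section~2 for the $\alpha$-Sombor lower bound: (a) a switching argument to show a minimizer is chemical, and (b) the $n_{s,t}$ bookkeeping of Lemma \ref{13.2l}, which for a chemical cactus reduces $I_f$ to $(4n-4)f(4,2)+(nk-4n+4)f(2,2)+\big(f(4,4)+f(2,2)-2f(4,2)\big)n_{4,4}$; the coefficient of $n_{4,4}$ is strictly positive by the strict case of (\ref{e21e}) with $s_1=s_2=4$, $t_1=t_2=2$, so minimizing $I_f$ over chemical cacti is minimizing $n_{4,4}$. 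Your stage (b) is correct and essentially complete: for $k=3$ the identity $n_{4,4}=\sum_C\binom{c_C}{2}$ together with $\sum_C c_C=2(n-1)$ and convexity gives $n_{4,4}\geq n-2$ with equality exactly for cactus chains, and your independence-number explanation of why the ``only if'' in (ii) is confined to $k\in\{4,5\}$ is exactly the right reason.

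The genuine gap is stage (a), and you have named it yourself. The assertion that ``the edges destroyed are of type $(2l,2)$ and the edges created are exactly the types on the right of (\ref{e22e})'' is not true of a general configuration: the $2l-2$ edges at $v$ that survive the switch go to neighbours of arbitrary even degree, the reattachment site sits next to the cut vertex $v_1$ of the chosen pendent polygon, whose degree $2a$ is unconstrained, and the resulting cross terms $f(2a,4)-f(2a,2)$ and $f(2l,d(u))-f(2l-2,d(u))$ must first be paired off via the escalating inequality (\ref{e21e}) before (\ref{e22e}) can be applied to what remains (this is visible already in the star-like case, where the change works out to $(2l-4)\big(f(2l-2,2)-f(2l,2)\big)+f(2l-2,4)+f(2l-2,2)+3f(4,2)-4f(2l,2)-f(2,2)$, and one needs (\ref{e22e}) \emph{plus} the strict inequality $2f(4,2)<f(4,4)+f(2,2)$ to conclude). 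Moreover (\ref{e22e}) and (\ref{e23e}) are non-strict, so even after this pairing you only obtain $I_f(G')\leq I_f(G)$; the ``only if'' halves of both equality statements require a strict decrease for every non-chemical $G$, and appealing to ``the strict escalating part'' without exhibiting, in each adjacency configuration, a quadruple $(s_1,s_2,t_1,t_2)$ with $s_1>t_1$ and $s_2>t_2$ does not deliver it. Until that case analysis is carried out, the reduction to chemical cacti --- and with it the theorem --- remains unproved.
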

\begin{corollary}\label{15.1c}  Let  $n\geq3,k\geq3,\alpha\geq 1,\beta>1$ and $G\in \mathcal{G}_{n,k}$.\\
(i). If $k=3$, then $SO_\alpha(G;\beta)\geq 2\big(2^{\alpha+1}\big)^{\beta}+
2n\big(4^{\alpha}+2^{\alpha}\big)^{\beta}+(n-2)\big(2\cdot4^{\alpha}\big)^{\beta}$, where the equality holds if and only if
$G\in \mathcal{A}_{n,3}$.\\
(ii). If $k\geq 4$, then $SO_\alpha(G;\beta)\geq (kn-4n+4)\big(2^{\alpha+1}\big)^{\beta}+(4n-4)\big(4^{\alpha}+2^{\alpha}\big)^{\beta}$, where the equality holds if $G\in\mathcal{B}_{n,k}$. Furthermore, if $k\in \{4,5\}$, then  the equality holds if and only if $G\in\mathcal{B}_{n,k}.$
\end{corollary}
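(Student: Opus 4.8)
The plan is to recognize $SO_\alpha(G;\beta)$ as a special instance of the general bond-incident-degree index $I_f(G)$ and then invoke the abstract extremal result of Theorem \ref{15.1t}. Concretely, I would set $f(s,t)=r_\alpha(s,t;\beta)=\left(s^\alpha+t^\alpha\right)^\beta$, so that by definition $I_f(G)=\sum_{uv\in E(G)}f(d(u),d(v))=SO_\alpha(G;\beta)$. The whole argument is then a matter of checking one hypothesis and performing one substitution.

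The first step is to verify the hypothesis of Theorem \ref{15.1t}, namely that $f$ is a special escalating function. This is precisely the content of Lemma \ref{15.3l}, which guarantees that $\left(s^\alpha+t^\alpha\right)^\beta$ is special escalating whenever $\alpha\geq 1$ and $\beta>1$. Since the corollary assumes exactly these ranges, the hypothesis is satisfied and Theorem \ref{15.1t} applies verbatim with $I_f=SO_\alpha(\,\cdot\,;\beta)$.

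The second step is to substitute the explicit degree-pair values into the bounds of Theorem \ref{15.1t}. Here I would compute
$$f(2,2)=\left(2^\alpha+2^\alpha\right)^\beta=\left(2^{\alpha+1}\right)^\beta,\quad f(4,2)=\left(4^\alpha+2^\alpha\right)^\beta,\quad f(4,4)=\left(4^\alpha+4^\alpha\right)^\beta=\left(2\cdot 4^\alpha\right)^\beta.$$
Inserting these into the inequality of Theorem \ref{15.1t}(i) yields the bound $2\left(2^{\alpha+1}\right)^\beta+2n\left(4^\alpha+2^\alpha\right)^\beta+(n-2)\left(2\cdot 4^\alpha\right)^\beta$ for the case $k=3$, while inserting them into Theorem \ref{15.1t}(ii) gives $(kn-4n+4)\left(2^{\alpha+1}\right)^\beta+(4n-4)\left(4^\alpha+2^\alpha\right)^\beta$ for $k\geq 4$. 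The equality characterizations transfer directly: the bound in (i) is attained exactly when $G\in\mathcal{A}_{n,3}$, the bound in (ii) is attained when $G\in\mathcal{B}_{n,k}$, and this becomes an ``if and only if'' when $k\in\{4,5\}$.

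Since the result is an immediate specialization of a previously established general theorem, I do not anticipate any genuine obstacle. The only points requiring care are the bookkeeping of the three function values $f(2,2)$, $f(4,2)$, $f(4,4)$, and confirming that the parameter ranges $\alpha\geq 1$, $\beta>1$ match exactly those required by Lemma \ref{15.3l}, so that the special escalating property---and hence Theorem \ref{15.1t}---is legitimately invoked.
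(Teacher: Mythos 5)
Your proposal is correct and follows exactly the paper's own proof: specialize Theorem \ref{15.1t} to $f(s,t)=r_\alpha(s,t;\beta)$, invoke Lemma \ref{15.3l} to verify the special escalating hypothesis, and substitute the values $f(2,2)$, $f(4,2)$, $f(4,4)$. You have merely spelled out the ``simple calculation'' that the paper leaves implicit.
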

\begin{proof} In Theorem \ref{15.1t}, set $f(s,t)=r_\alpha(s,t;\beta)$. Then the
corollary  follows immediately  by  Lemma \ref{15.3l} and a simple calculation.
\end{proof}

 \section{Acknowledgement}
 This work was supported by the National Natural Science Foundation of China [Grant number: 11971406].


\begin{thebibliography}{99}
\bibitem{Ali} A. Ali, D. Dimitrov, On the extremal graphs with respect to bond incident degree indices, {\em Discrete Appl. Math.}  238 (2018) 32-40.


\bibitem{Cru} R. Cruz, I. Gutman, J. Rada, Sombor index of chemical graphs,
    {\em Appl. Math. Comput.}  399 (2021) 126018.

\bibitem{Das}  K.C. Das, A.S. \c{C}evik, I.N. Cangul, Y. Shang, On Sombor index, {\em Symmetry} 13, 140 (2021), https://doi.org/10.3390/sym13010140.

\bibitem{De10} E. Deutsch, S. Klav\v zar, $M$-polynomial revisited: Bethe cacti and an extension of Gutman's approach, {\em J. Appl. Math. Comput.} 60 (2019) 253-264.


\bibitem{Faj} S. Fajtlowicz, On conjectures of Graffiti-II, {\em Congr. Numer.} 60 (1987)  187-197.

\bibitem{Fur} B. Furtula, I. Gutman,   A forgotten topological index, {\em J. Math. Chem.} 53 (2015) 1184-1190.

\bibitem{Gu2013} I. Gutman, Degree-based topological indices, {\em Croat. Chem. Acta}  86  (2013) 351-361.

\bibitem{Gutman} I. Gutman, J. To\v sovi\' c,   Testing the quality of molecular structure descriptors. Vertex-degree-based topological indices, {\em J. Serb. Chem. Soc.} 78 (2013) 805-810.

\bibitem{Gu1972} I. Gutman, N. Trinajsti\'c, Graph theory and molecular orbitals. Total $\pi$-electron energy of alternant hydrocarbons, {\em Chem. Phys. Lett.}     17  (1972)  535-538.

\bibitem{Gu2020} I. Gutman, Geometric approach to degree-based topological indices: Sombor indices, {\em  MATCH
Commun. Math. Comput. Chem. } 86 (2021) 11-16.

\bibitem{Her} J.C. Hern\'andez, J.M. Rodr\'iguez, O. Rosario, J.M. Sigarreta,  Optimal inequalities and extremal problems on the general Sombor index, paper submitted arXiv:2108.05224.

\bibitem{Kulli2019} V.R. Kulli, The $(a,b)$-$KA$ indices of polycyclic aromatic hydrocarbons
and benzenoid systems, {\em International Journal of Mathematics Trends
and Technology}  65 (2019) 115-120.

\bibitem{Kulli2021} V.R. Kulli, I. Gutman, Computation of Sombor indices of certain
networks, {\em SSRG Int. J. Appl. Chem.} 8 (2021)  1-5.

\bibitem{Li} X. Li, J. Zheng, A unified approach to the extremal trees for different indices, {\em MATCH Commun. Math. Comput. Chem.}  54 (2005) 195-208.

\bibitem{Lin} Z. Lin, T. Zhou, V.R. Kullic, L. Miao, On the first Banhatti-Sombor index, preprint arXiv:2104.03615.

\bibitem{Mar1979} A.W. Marshall , I. Olkin,    {\it Inequalities: Theory of Majorization and Its Applications},  Academic Press,  New York,  1979.

\bibitem{Nik}  S. Nikoli\'c, G. Kova\v cevi\'c, A. Mili\v cevi\'c, N. Trinajsti\'c, The Zagreb indices 30 years after, {\em Croat. Chem. Acta} 76 (2003) 113-124.

\bibitem{Ran1} M. Randi\'c, On characterization of molecular branching, {\em J. Amer. Chem. Soc.} 97 (1975)  6609-6615.

\bibitem{Red} I. Red\v{z}povi\'{c},  Chemical applicability of Sombor indices, {\em J. Serb. Chem. Soc.}  86 (2021) 445-457.

\bibitem{Reti} T. R\'eti, T. Do\v sli\'c, A. Ali, On the Sombor index of graphs, {\em Contrib.
Math.} 3 (2021) 11-18.

\bibitem{Vu} D. Vuki\v cevi\'c, J. Durdevi\'c, Bond additive modeling 10.
    Upper and lower bounds of bond incident degree indices of catacondensed
    uoranthenes, {\em Chem. Phys. Lett.} 515 (2011) 186-189.

\bibitem{Wang2014} H. Wang, Functions on adjacent vertex degrees of trees with given degree sequence, {\em Cent. Eur. J. Math.\/}  12  (2014)  1656-1663.

\bibitem{YJC2} J. Ye, M. Liu, Y. Yao, K.C. Das, Extremal  polygonal cacti for  bond incident degree indices,
    {\em Discrete Appl. Math.}  257 (2019)  289-298.


\bibitem{Zhou1} B. Zhou, N. Trinajsti\'c, On general sum-connectivity index, {\em J. Math. Chem.}   47  (2010)  210-218.
\end{thebibliography}
\end{document}